\documentclass{amsart}
\usepackage{amssymb,amsmath,mathrsfs,amsfonts}
\usepackage{amscd, amssymb, amsmath, amsthm, graphics,graphicx}
\usepackage{amsmath,amsfonts,amsthm,amssymb}
\usepackage{mathabx}
\usepackage{slashbox}
\usepackage{fancyhdr}
\usepackage[all]{xy}
\usepackage[colorlinks=true]{hyperref}
\usepackage[small,nohug,heads=vee]{diagrams}
\usepackage{multirow}
\usepackage{enumitem}
\diagramstyle[labelstyle=\scriptstyle]
\numberwithin{equation}{section}

\newtheorem{theorem}{Theorem}[section]
\newtheorem{lemma}[theorem]{Lemma}
\newtheorem{proposition}[theorem]{Proposition}

\theoremstyle{definition}
\newtheorem{definition}[theorem]{Definition}

\newtheorem{remark}[theorem]{Remark}

\begin{document}

\title[On virtual chirality of $3$-manifolds]{On virtual chirality of $3$-manifolds}

\author{Hongbin Sun}
\address{Department of Mathematics, Rutgers University - New Brunswick, Hill Center, Busch Campus, Piscataway, NJ 08854, USA}
\email{hongbin.sun@rutgers.edu}

\author{Zhongzi Wang}
\address{School of Mathematical Sciences, Peking University, Beijing 100871, CHINA}
\email{wangzz22@stu.pku.edu.cn}

\subjclass[2010]{55M25, 57M10, 57K32}
\thanks{The first author is partially supported by the Simons Collaboration Grant 615229.}
\keywords{$3$-manifolds, finite cover, chirality}

\date{\today}
\begin{abstract} 

We prove that if a prime  3-manifold $M$ is not finitely covered by the 3-sphere or a product manifold, then $M$ is virtually chiral,
i.e. it has a finite cover that does not admit an orientation reversing self-homeomorphism.
In general if a $3$-manifold contains a virtually chiral prime summand, then it is virtually chiral.
\end{abstract}

\maketitle
\vspace{-.5cm}
\tableofcontents
\section{Introduction}

In this paper, we assume all $3$-manifolds are compact, connected, orientable, and have empty or tori boundary. All hyperbolic $3$-manifolds are complete and have finite volume. All homology groups are under the $\mathbb{Z}$-coefficient.

A compact, orientable manifold $M$ is said to be {\it chiral} if it does not admit an orientation reversing self-homeomorphism, and $M$ is said to be {\it achiral} if it admits an orientation reversing self-homeomorphism. The chirality is a fundamental property of an orientable manifold, and it is especially more interesting in dimension $3$.
Determining the chirality of a manifold  is often tricky. Here are a few famous examples: the complement of the figure-eight  knot  is achiral (1844,  Lusting), and the complement of the trefoil knot is chiral (1914, Dehn). For rather recent papers related to chirality, see \cite{BL}, \cite{Mu}, \cite{SWWZ} for example.

We say that a manifold $M$ {\it virtually} has a certain property if $M$ admits a finite cover $\widetilde{M}$ that has this property. The study of virtual properties of $3$-manifolds has always been an active research topic on $3$-manifolds, especially after Agol's solution (\cite{Agol}) of Thurston's virtual Haken conjecture on hyperbolic $3$-manifolds, which is based on Wise's machinery on cube complexes (\cite{Wise}). The general philosophy is: a $3$-manifold has a lot of finite covers, so it should have a finite cover that satisfies any given reasonable property. Various results in this direction after  \cite{Agol} and \cite{Wise} can be found in  a survey \cite{LS}.

In this paper, we study virtually chirality of $3$-manifolds, i.e. whether a given $3$-manifold $M$ has a finite cover $\widetilde{M}$ that does not admit an orientation reversing self-homeomorphism. In general, if a $3$-manifold $M$ admits an orientation reversing self-homeomorphism $f:M\to M$, $f$ may not be lifted to a finite cover $\widetilde{M}$ of $M$. Moreover, if the (orientable) minimal orbifold of a non-arithmetic hyperbolic $3$-manifold $M$ admits no orientation reversing self-isometry, then any $3$-manifold commensurable to $M$ is chiral. So it is expected that most $3$-manifolds should be virtually chiral, and the following result confirms this expectation.

\begin{theorem}\label{main}
Let $M$ be a compact, connected, orientable, prime $3$-manifold with empty or tori boundary.
If $M$ is not covered by the $S^3$ or the product of a surface and $S^1$, then $M$ has a finite cover that does not admit an orientation reversing self-homeomorphism.
\end{theorem}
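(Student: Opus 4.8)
The plan is to run through the geometric (JSJ) decomposition of $M$ and, in each case, pass to a finite cover on which an orientation-sensitive invariant fails to agree with its mirror image. The structural input that makes this possible is rigidity: an orientation reversing self-homeomorphism of any finite cover preserves the canonical JSJ system of tori, carries the (essentially unique) Seifert fibration of a Seifert piece to itself, and on any hyperbolic piece is isotopic to an orientation reversing isometry by Mostow--Prasad rigidity. Hence in each case it is enough to find a cover on which the corresponding rigid data admit no orientation reversing symmetry.

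\emph{The Seifert and $\mathrm{Sol}$ cases.} Suppose $M$ is Seifert fibered and not virtually $S^3$ or a product. Then its base $2$-orbifold is Euclidean or hyperbolic, and the rational Euler number $e(M)$ must be nonzero: if $e(M)=0$ some finite cover of $M$ is an honest circle bundle of Euler number $0$ over a surface, hence a product. Since the fibration is unique up to isotopy in this range, $e$ is an invariant of the oriented homeomorphism type with $e(-M)=-e(M)$, so $M$ itself is already chiral and no cover is needed. If $M$ is a $\mathrm{Sol}$-manifold one may assume, after a cover, that it is a torus bundle with Anosov monodromy $\phi\in\mathrm{SL}_2(\mathbb Z)$; the mapping torus is chiral unless $\phi$ is $\mathrm{GL}_2(\mathbb Z)$-conjugate to $\phi^{-1}$, and in that exceptional situation one produces a further torus-bundle cover, obtained by restricting to a suitable $\phi^{k}$-invariant finite-index sublattice of $\mathbb Z^2$, whose monodromy is no longer conjugate to its inverse.

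\emph{The hyperbolic case.} If $M$ is non-arithmetic then $\mathrm{Comm}(\pi_1M)$ is a lattice, every isometry of a cover $\mathbb H^3/K$ with $K\le\pi_1M$ of finite index comes from the orientation-extended commensurator, and a suitable finite-index $K$ is not normalized by any orientation reversing element of it (lattices have finite-index subgroups equal to their own normalizer), so that cover is chiral. The arithmetic case is the core of the paper, since now $\mathrm{Comm}(\pi_1M)$ is dense and the usual geometric invariants (volume, Chern--Simons invariant, cusp shapes, minimal orbifold) are commensurability invariants that may fail to detect chirality of $M$ or any cover --- witness the amphichiral, arithmetic figure-eight knot complement. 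The plan here is to use the linking form $\lambda_{\widetilde M}$ on $\mathrm{Tor}\,H_1(\widetilde M;\mathbb Z)$ (or a linking-type form in the cusped case), which is \emph{not} a commensurability invariant and which satisfies $\lambda_{-\widetilde M}\cong-\lambda_{\widetilde M}$: as soon as $\lambda_{\widetilde M}\not\cong-\lambda_{\widetilde M}$ --- for example if $\mathrm{Tor}\,H_1(\widetilde M)$ has an odd number of cyclic summands of some order $p^{k}$ for a prime $p\equiv 3\pmod 4$ --- the cover $\widetilde M$ is chiral. Using the Agol--Wise machinery one first arranges a cover with $b_1\ge1$ and then takes a cyclic $p^{k}$-cover dual to a primitive cohomology class, whose linking form can be read off from the Alexander/Blanchfield module of that class. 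The technical heart of the argument, and the step I expect to be the main obstacle, is to prove that among such cyclic covers one can always be chosen whose linking form is asymmetric --- i.e.\ that this family of covers is not forced, for some hidden reason, to carry only symmetric linking forms.

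\emph{The non-geometric case.} When $M$ is prime but not geometric its JSJ decomposition is nontrivial and canonical, and $\pi_1M$ is subgroup separable. Using separability one builds a finite cover of $M$ that restricts, over each JSJ piece, to a prescribed finite cover of that piece with matching behaviour on the JSJ tori; choosing these piece-covers via the earlier cases (the hyperbolic case above when a piece is hyperbolic, and the Seifert/gluing data otherwise) one arranges that the resulting decorated JSJ graph --- the pieces together with their Euler numbers and gluing matrices --- admits no orientation reversing automorphism. Such a cover is chiral, since an orientation reversing self-homeomorphism would have to carry one JSJ piece to another by an orientation reversing homeomorphism compatibly with the gluing data. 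Making this reduction precise --- in particular controlling how chirality of the individual pieces survives the passage to a common cover and interacts with the gluing --- is the remaining piece of work.
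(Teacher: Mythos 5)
Your proposal correctly identifies the structural skeleton (rigidity of the JSJ system, Mostow--Prasad on hyperbolic pieces, uniqueness of Seifert fibrations), and your Seifert-fibered argument via the Euler number $e(-M)=-e(M)\neq e(M)$ is sound. But the proposal leaves open precisely the cases that carry the mathematical content. The most serious gap is the arithmetic hyperbolic case, which you yourself flag as unresolved: you propose detecting chirality by an asymmetric linking form on $\mathrm{Tor}\,H_1$ of a cyclic cover, but give no argument that such a cover exists, and there is no evident mechanism forcing the Blanchfield/Alexander data of a dual class to produce an asymmetric form. The paper avoids this entirely and treats arithmetic and non-arithmetic manifolds uniformly: one picks a loxodromic $\gamma$ with $\mathrm{tr}(\gamma)\neq\overline{\mathrm{tr}(\gamma)}$ (possible by Theorem 3.3.7 of Maclachlan--Reid), notes that only finitely many conjugacy classes $\gamma_1,\dots,\gamma_n$ have trace $\overline{\mathrm{tr}(\gamma)}$ and none is conjugate into $\langle\gamma\rangle$, and then uses the fact that $\pi_1(M)$ is subgroup conjugacy distinguished (Chagas--Zalesskii, via Agol--Wise) to find a finite-index subgroup $\widetilde G\supset\langle\gamma\rangle$ meeting no conjugate of any $\gamma_j$. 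An orientation reversing isometry of the corresponding cover would conjugate $\gamma$ to an element of $\widetilde G$ with trace $\overline{\mathrm{tr}(\gamma)}$, a contradiction. Your non-arithmetic commensurator argument is plausibly completable (it is close to Long--Reid), but it is not needed.

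The second gap is the non-geometric case, which you leave as ``the remaining piece of work.'' Subgroup separability alone does not give covers restricting to prescribed covers of each JSJ piece with compatible boundary behaviour; the paper needs the specific virtual-specialness constructions of Deruelle--Liu--Wang (controlling all elevations of all pieces) and Liu (embedding a single prescribed piece-cover). Moreover, for graph manifolds there is no hyperbolic piece to anchor the asymmetry, and your sketch gives no substitute: the paper's argument there is genuinely different, arranging all pieces to be products $\Sigma\times S^1$, forcing some JSJ torus to have fiber-intersection number $b_T\geq 3$, building a piece whose adjacent-fiber slope data admit no orientation reversing symmetry, and then separating that piece from all others by a cyclic cover that changes the genera of the remaining pieces. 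A related issue is that the final step in both non-geometric cases --- ensuring the distinguished chiral piece cannot be carried to any \emph{other} piece by the hypothetical homeomorphism --- requires the volume/genus comparison via a $k$-fold cyclic cover trivial over the distinguished piece; your sketch does not address this. Finally, your Sol case quietly assumes that when $\phi$ is conjugate to $\phi^{-1}$ one can pass to a sublattice cover whose monodromy is not; that is the main theorem of Tian--Wang--Wang and rests on nontrivial number theory, not on a routine lattice restriction.
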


Theorem \ref{main} deals with geometric $3$-manifolds  and non-geometric $3$-manifolds. 
There are   eight geometries in dimension $3$:
$$S^3, S^2\times \mathbb{E}^1, \mathbb{E}^3, \mathbb{H}^2\times \mathbb{E}^1, \text{Nil}, \widetilde{\text{PSL}(2,\mathbb{R})}, \text{Sol}, 
\mathbb{H}^3 .$$

3-manifolds supporting the first four geometries are finitely covered by either $S^3$ or a product manifold, so they are not  considered  in Theorem \ref{main}. 
Theorem \ref{main} indeed fails for the first four geometries. Actually, $S^3, S^2\times S^1, T^3, \Sigma_g\times S^1$, the most standard
representatives in the first four classes,  admits no finite chiral coverings.

It is known that Theorem \ref{main}  holds for  $\widetilde{\text{PSL}(2,\mathbb{R})}$ and $\text{Nil}$ $3$-manifolds. A very recent work  \cite{TWW} proved that every commensurable class of Sol 3-manifolds contains a chiral element, whose proof is based on number theory.
Theorem \ref{main} for Sol 3-manifolds follows  from \cite{TWW} (see Remark \ref{sol}). Indeed, \cite{TWW} inspires this work.

So the proof of Theorem \ref{main} divides into three case: hyperbolic $3$-manifolds,  mixed $3$-manifolds, and graph $3$-manifolds. The proof for mixed $3$-manifolds depends on the hyperbolic case, and the proof for graph $3$-manifolds is independent of the other cases. These three cases will be proved in Sections \ref{hyperbolic}, \ref{mixed}, and \ref{graph}, respectively.
In Section \ref{reducible}, based on Theorem \ref{main}, we will prove the following result on virtual chirality of reducible $3$-manifolds.

\begin{theorem}\label{main2}
Let $M$ be a compact, connected, orientable, prime $3$-manifold with empty or tori boundary.
If $M$ contains a virtually chiral prime summand, then $M$ is virtually chiral.
In particular,  $M$ is 
virtually chiral if it has non-vanishing simplicial volume.

\end{theorem}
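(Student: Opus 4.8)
The plan is to combine a flexible construction of finite covers of a connected sum with the standard description of how an orientation-reversing self-homeomorphism interacts with the Kneser--Milnor decomposition. Write the prime decomposition $M=M_1\#\cdots\#M_k$ with $M_1$ the given virtually chiral prime summand; if $k=1$ there is nothing to prove, so assume $k\ge2$. All finite covers of $S^3$ (resp.\ of $S^2\times S^1$) are homeomorphic to $S^3$ (resp.\ to $S^2\times S^1$), hence achiral, so $M_1$ is neither of these; being prime and orientable it is therefore irreducible, has $\pi_1(M_1)\ne1$, and every finite cover of it is again irreducible. I will use two auxiliary facts. First, a \emph{lifting construction}: for every connected finite cover $P\to M_1$ of degree $e$ there is a connected degree-$e$ cover $\widetilde M\to M$ admitting an orientation-preserving homeomorphism $\widetilde M\cong P\,\#\,M_2^{\#e}\#\cdots\#M_k^{\#e}$ (give $\widetilde M$ the lifted orientation, $P$ the pulled-back orientation, and each $M_j$-copy the orientation induced from $M$); one builds $\widetilde M$ from the permutation representation $\pi_1(M)=\pi_1(M_1)*\cdots*\pi_1(M_k)\to\operatorname{Sym}\bigl(\pi_1(M_1)/H\bigr)$ that is the coset action on the first free factor (with $P$ corresponding to $H\le\pi_1(M_1)$) and trivial on the others, reading off the pieces over a sphere decomposition of $M$ and checking that the resulting gluing graph is a tree. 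Second, the \emph{chirality criterion}: a closed oriented $3$-manifold $W=W_1\#\cdots\#W_n$ (prime decomposition) is achiral if and only if the multiset $\{[W_1],\dots,[W_n]\}$ of oriented homeomorphism types is invariant under orientation reversal $[W_i]\mapsto[\overline{W_i}]$ (here $\overline{\;\cdot\;}$ denotes reversed orientation); this follows from uniqueness up to isotopy of the decomposing sphere system, which lets one isotope an orientation-reversing self-homeomorphism so that it permutes the summands and restricts to orientation-reversing homeomorphisms between them.

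With these in hand, fix a chiral finite cover $N\to M_1$ of degree $d_1$ and argue by cases on $d_1$. If $d_1\ge2$, apply the lifting construction with $P=N$: then $\widetilde M\cong N\#M_2^{\#d_1}\#\cdots\#M_k^{\#d_1}$, where $N$ is irreducible and chiral, so $N\ne S^3$ and $[N]\ne[\overline N]$; if $\widetilde M$ were achiral, the criterion would force the multiplicity of $[N]$ among its prime summands to equal that of $[\overline N]$, i.e.\ $1+d_1\,\bigl|\{j\ge2:M_j\cong N\}\bigr|=d_1\,\bigl|\{j\ge2:M_j\cong\overline N\}\bigr|$, which is impossible modulo $d_1\ge2$, so $\widetilde M$ is chiral. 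If $d_1=1$, then $M_1$ itself is chiral; if $M$ is already chiral we are done, so assume $M$ is achiral, and then the criterion together with $[M_1]\ne[\overline{M_1}]$ gives $\beta=\alpha+1$, where $\alpha=\bigl|\{j\ge2:M_j\cong M_1\}\bigr|$ and $\beta=\bigl|\{j\ge2:M_j\cong\overline{M_1}\}\bigr|$. Since $\pi_1(M_1)\ne1$ it has a proper finite-index subgroup (the trivial one if $\pi_1(M_1)$ is finite, otherwise by residual finiteness of $3$-manifold groups), so choose a connected finite cover $P\to M_1$ of some degree $e\ge2$ and apply the lifting construction. As $P$ is irreducible it contributes exactly one prime summand, homeomorphic to $M_1$, to $\overline{M_1}$, or to neither; hence in $\widetilde M\cong P\#M_2^{\#e}\#\cdots\#M_k^{\#e}$ the multiplicity of $[M_1]$ is $\gamma_++e\alpha$ and that of $[\overline{M_1}]$ is $\gamma_-+e\beta$ with $\gamma_+,\gamma_-\in\{0,1\}$ and $\gamma_+\gamma_-=0$; achirality of $\widetilde M$ would give $\gamma_+-\gamma_-=e(\beta-\alpha)=e\ge2$, a contradiction, so again $\widetilde M$ is chiral. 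In all cases $M$ has a chiral finite cover.

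For the last assertion: simplicial volume is additive under connected sum and multiplicative under finite covers, so if $\|M\|\ne0$ then some prime summand has non-vanishing simplicial volume, hence is hyperbolic or mixed, hence is finitely covered by neither $S^3$ nor a product of a surface with $S^1$, and is therefore virtually chiral by Theorem~\ref{main}; the first part then applies. The step I expect to require the most care is the lifting construction --- pinning down the exact oriented homeomorphism type of the cover, and in particular excluding spurious $S^2\times S^1$ summands, rather than merely producing ``some connected sum of covers of the $M_j$''. The case $d_1=1$ is also genuinely needed, since certain chiral prime $3$-manifolds (for instance suitable lens spaces) admit no chiral finite cover of degree $\ge2$; there one is forced to use an auxiliary, possibly achiral, cover of $M_1$ and to rule out the possibility that $M$ is already chiral before tracking the summand multiplicities.
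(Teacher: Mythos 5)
Your proof is correct, and it takes a genuinely different route from the paper's. The paper argues via two lemmas: a gluing construction (Lemma~\ref{reduciblecover}) which, given finite covers of \emph{all} prime summands, produces a finite cover of $M$ whose summands are those covers in certain multiplicities together with some unavoidable $S^2\times S^1$'s; and a case-by-case geometric lemma (Lemma~\ref{distinctcover}) showing that every prime manifold of hyperbolic, $\text{Sol}$, $\text{Nil}$, $\widetilde{\text{PSL}(2,\mathbb{R})}$ or non-geometric type has a finite cover not homeomorphic to itself. The paper's endgame is to first produce a cover $M'$ containing a chiral summand $P'$, and then, if $-P'$ also occurs, to pass to a further cover in which every copy of $-P'$ is replaced by a cover $\widetilde{(-P')}\not\cong -P'$ while $P'$ is retained, so that $-P'$ disappears from the oriented prime decomposition. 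You instead cover only the distinguished summand, via the coset permutation representation that is trivial on the other free factors; this yields the sharper conclusion $\widetilde M\cong P\#M_2^{\#e}\#\cdots\#M_k^{\#e}$ with a tree dual graph (no spurious $S^2\times S^1$ summands), and you finish with a multiplicity count modulo $e$. Your route buys two things. First, it avoids Lemma~\ref{distinctcover} entirely, replacing its case analysis by residual finiteness plus counting. Second, it proves Theorem~\ref{main2} in the full generality in which it is stated -- for an arbitrary virtually chiral prime summand -- whereas the argument of Section~\ref{reducible} (Theorem~\ref{reducibletheorem}) only treats summands of the listed geometric types and would not apply to, say, a chiral lens space summand; that is precisely the $d_1=1$ situation you isolate, where one must use an auxiliary achiral cover of degree $e\ge 2$ and rule out $M$ itself being chiral first. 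Two minor points: the ``chirality criterion'' should be justified by Milnor's uniqueness of the oriented prime decomposition (valid also for compact orientable manifolds with toral boundary) rather than by uniqueness of the sphere system up to isotopy, which fails in the presence of $S^2\times S^1$ summands; and when $\pi_1(M_1)$ is finite and $P=S^3$, the cover $P$ contributes no prime summand at all, which your bookkeeping $\gamma_+=\gamma_-=0$ already accommodates.
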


Known facts used in our  proof for Theorem \ref{main} include: For complete finite volume hyperbolic $3$-manifold $M$, $\pi_1(M)$   is subgroup conjugacy distinguished \cite{CZ}, and $M$ has a finite cover $\widetilde M$ so that the  $\widetilde M/\partial \widetilde M$ has positive first Betti number  \cite{AFW}. (Here  $\widetilde M/\partial \widetilde M$ means that we pinch each component of $\partial \widetilde M$ to a point, instead of pinching $\partial \widetilde M$ to a single point.) 
Some constructions of finite coverings of mixed 3-manifolds \cite{DLW}, \cite{Liu}. All these facts rely on Agol's virtual specialization theorem (\cite{Agol}), 
 which is based on Wise work on cube complexes (\cite{Wise}), as well as Przytycki and Wise's work \cite{PW}.

In \cite{LR}, Long and Reid proved that for any non-arithmetic hyperbolic $n$-manifold $M$ with $n\geq 2$, there exists a manifold $M'$ commensurable to $M$ such that $\text{Isom}_+(M')$ is trivial, i.e. $M'$ only admits the trivial orientation preserving self-isometry. When restricting to hyperbolic $3$-manifolds, Theorem \ref{main} gives a finite cover $\widetilde{M}$ of $M$ that has no orientation reversing self-isometry, so these two results are complement of each other in some sense.
\bigskip

The authors are informed that Ye Tian and Hang Yin have proved that hyperbolic 3-manifolds are virtually chiral, using an argument in number theory,  and Alan Reid also makes progress for hyperbolic 3-manifolds in this direction.

\section{Preliminary}\label{preliminary}

In this section, we review some basic notions on decomposition of $3$-manifolds. More details on topology and geometry of $3$-manifolds can be found in \cite{Hat} and \cite{Sco} respectively.

For a compact, connected, orientable, and irreducible $3$-manifold $M$ with empty or tori boundary, the geometrization of $3$-manifolds implies the existence of a canonical family $\mathcal{T}$ of incompressible tori and Klein bottle in $M$, such that each component of $M\setminus \mathcal{T}$ (called a geometric piece or simply a piece of $M$) supports one of Thurston's eight geometries. 

When $\mathcal{T}$ is non-empty, we say that the $3$-manifold $M$ is a {\it non-geometric $3$-manifold}. In this case, each piece supports either the hyperbolic geometry or the $\mathbb{H}^2\times \mathbb{E}^1$ geometry, and we call it a hyperbolic piece or a Seifert piece respectively. For a Seifert piece $J\subset M$, there exists a compact orientable surface $F$ and a periodic self-homeomophism $f:F\to F$, such that $J$ is homeomorphic to the mapping torus $F\times I/(x,0)\sim (f(x),1)$.

The geometric decomposition is closely related to the JSJ decomposition of $3$-manifolds. These two decompositions are slightly different from each other, but also coincide in most of the cases. For any $3$-manifold $M$ that does not support the $\text{Sol}$ geometry, there exists a double cover $\widetilde{M}$ of $M$ that does not contain the twisted $I$-bundle over Klein bottle as a submanifold, and these two decompositions coincide with each other in this case. Since we work on virtual properties of $3$-manifolds, we will not distinguish the subtle difference of these two decompositions in this paper.

We call a non-geometric $3$-manifold $M$ a {\it mixed $3$-manifold} if it has at least one hyperbolic piece under its geometric decomposition.
We call a non-geometric $3$-manifold $M$ a {\it graph $3$-manifold} if all of its pieces are Seifert manifolds, i.e, it is not a mixed $3$-manifold. Note that Seifert $3$-manifolds and $\text{Sol}$ $3$-manifolds are not considered to be graph $3$-manifolds in this paper. For a graph $3$-manifold, all of its geometric pieces are Seifert $3$-manifolds whose base $2$-orbifold has a hyperbolic structure (and has negative Euler characteristic).

Let $M$ be a graph $3$-manifold that does not contain the twisted $I$-bundle over Klein bottle as its submanifold. Then any Seifert piece of $M$ is a circle bundle over a hyperbolic $2$-orbifold and has a unique Seifert structure. For each JSJ torus $T$ of $M$, it is adjacent to two (not necessarily distinct) Seifert pieces $J_1,J_2$ of $M$, and the regular fibers of these two Seifert pieces give two distinct slopes $s_1,s_2$ on $T$. We use $b_T\in \mathbb{Z}_{\geq 1}$ to denote the geometric intersection number of $s_1$ and $s_2$ on $T$.

If a Seifert piece $J$ of $M$ has no singular fiber, then $J$ is homeomorphic to a product $F\times S^1$, and not that such a product structure on $J$ is not unique. Let $T_1,\cdots, T_k$ be the boundary components of $J$ contained in the interior of $M$. Once we fix a product structure on $J$ and fix orientations of $F$ and $S^1$, we have oriented slopes $h_i,c_i$ on $T_i$. Here $h_i$ is the oriented regular fiber on $T_i$, and $c_i$ is the oriented boundary component of $F$ contained in $T_i$. Then $h_i$ and $c_i$ form a basis of $H_1(T_i;\mathbb{Z})\cong \mathbb{Z}^2$, and we give  $H_1(T_i;\mathbb{Z})$ a coordinate such that $h_i=(1,0)$ and $c_i=(0,1)$.

Let $J_i$ be the Seifert piece adjacent to $T_i$ other than $J$, let $f_i$ be the slope on $T_i$ given by the regular fiber of $J_i$. We give $f_i$ an orientation so that it has positive second coordinate. Then we have $f_i=(a_i,b_{T_i})$ for some $a_i\in \mathbb{Z}$ and $\text{gcd}(a_i,b_{T_i})=1$ holds. These notions will be used in Section \ref{graph}.

Now we make some orientation conventions which will be used in Sections \ref{mixed} and \ref{graph}. Let $M$ is compact connected oriented 3-manifold. 
Let $p: \widetilde M\to M$ be a finite covering. Then  $\widetilde M$ always has the induced orientation from $M$, 
i.e.  the covering degree of $p$ is positive.
Suppose further $M$ is irreducible and  has empty or tori boundary. If $M$ has a non-trivial geometric decomposition, then the covering
$p| : \widetilde J \to J$ has positive degree, where $J$ is a geometric piece of $M$, and $\widetilde J$ is a component of $p^{-1}(J)$,  $J$ has the induced  
orientation from $M$ and $\widetilde J$ has the induced orientation from $\widetilde M$.
 
\section{The case of hyperbolic $3$-manifolds}\label{hyperbolic}

We prove the hyperbolic case of Theorem \ref{main} in this section, and we restate the result here.

\begin{proposition}\label{hyperbolictheorem}
Let $M$ be an orientable, finite volume hyperbolic $3$-manifold, then $M$ has a finite cover $\widetilde{M}$ that is chiral.
\end{proposition}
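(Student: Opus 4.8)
The plan is to produce a finite cover of a hyperbolic $3$-manifold $M$ that has no orientation reversing self-homeomorphism, equivalently (by Mostow rigidity) no orientation reversing self-isometry. The key idea, inspired by the number-theoretic approach of \cite{TWW} but carried out here group-theoretically, is to use the fact that $\pi_1(M)$ is subgroup conjugacy distinguished (\cite{CZ}): for any finitely generated subgroup $H \leq \pi_1(M)$ and any $g \in \pi_1(M)$ with $g \notin$ the commensurator-type conjugates of $H$, there is a finite-index subgroup separating the relevant data. I would combine this with the virtual homology result of \cite{AFW}, passing first to a finite cover $\widetilde{M}$ with $b_1(\widetilde M/\partial \widetilde M) > 0$, so that $\widetilde M$ carries nontrivial cohomological/homological structure that an orientation reversing self-homeomorphism must interact with in a constrained way.

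The main steps I would carry out are as follows. First, replace $M$ by a finite cover (still called $M$) that is ``generic'' in the sense that $\mathrm{Isom}_+(M)$ is as small as possible among the cover; in fact by Long--Reid type arguments or by a direct separability argument one can even arrange $\mathrm{Isom}_+(M)$ trivial, but that alone does not finish the problem because an achiral cover can have trivial orientation preserving isometry group while still admitting an orientation reversing one. So the real content is to kill orientation reversing isometries. Second, I would pick a concrete finite subset of data in $M$ that any self-isometry must preserve: a shortest geodesic, or a maximal cusp cross-section, or the systole length, together with a choice of peripheral subgroup or a choice of element realizing it. Third, I would use subgroup conjugacy distinguishedness to find a finite-index subgroup $\Gamma' \leq \pi_1(M)$ such that the preimage configuration in the corresponding cover $M'$ becomes ``asymmetric'': concretely, arrange that the number of shortest geodesics, or the combinatorial pattern of how cusps and short geodesics sit relative to a chosen homology class in $H_1(M';\mathbb{Z})$ or $H_1(M'/\partial M';\mathbb{Z})$, cannot be matched by any orientation reversing homeomorphism. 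An orientation reversing self-homeomorphism acts on $H_1$ and on the set of cusps, and also reverses the orientation of the normal bundle data / linking forms; the strategy is to build a cover where the induced action on $H_1(M'/\partial M')$ forced by preserving the geometric data is incompatible with reversing a suitable cup product or torsion linking pairing.

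More precisely, the cleanest route: let $M$ be a cover with $b_1(M/\partial M) \geq 1$ and fix a primitive class $\alpha \in H^1(M/\partial M;\mathbb{Z})$; this is dual to a properly embedded surface. Using separability I would arrange, by passing to a further finite cover, that there is an isometry-invariant finite set $S$ (say, short geodesics) whose algebraic intersection pattern with $\alpha$, or whose image under the map to a finite quotient detected by $\alpha$, is not preserved by any automorphism of $\pi_1$ sending $\alpha \mapsto -\alpha$; since an orientation reversing self-homeomorphism acting trivially-up-to-sign on a one-dimensional $H^1$ must send the Poincaré dual class to its negative, this yields the contradiction. The subgroup conjugacy distinguished property is exactly what lets me realize the needed separation of conjugacy classes of peripheral or loxodromic elements inside a finite cover.

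The hard part will be step three: making the asymmetry between the orientation preserving and orientation reversing cases precise and robust under passing to further covers. An orientation reversing self-homeomorphism need not fix any particular geodesic or cusp, it only permutes the finite invariant sets, so I must control not a single element but the whole permutation action, and then argue (via a counting or parity argument on how these sets pair with the fixed cohomology class, or via the asymmetry of the linking form $\lambda$ on the torsion of $H_1(M')$, which changes sign under orientation reversal) that no permutation consistent with an isometry can also be consistent with orientation reversal. Establishing that the finite cover furnished by \cite{CZ} and \cite{AFW} can simultaneously be taken to have all these features — positive Betti number of $M'/\partial M'$, a distinguished invariant finite set of geodesics, and an incompatible linking or intersection pattern — while only passing to finitely many further covers, is where the technical care is needed, and I expect the bulk of the section to be devoted to that bookkeeping.
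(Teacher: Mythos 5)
Your proposal assembles the right external inputs (Mostow rigidity, the subgroup conjugacy distinguished property of \cite{CZ}) but it never identifies a concrete invariant that actually distinguishes orientation reversing isometries from orientation preserving ones, and that is precisely the crux of the problem. You acknowledge this yourself: the ``hard part'' of step three --- producing a finite set of geodesics, a cohomology class, or a linking form whose behavior is provably incompatible with orientation reversal --- is left entirely open, and none of the candidate mechanisms you list (permutation actions on shortest geodesics, pairing with a class in $H^1(M/\partial M)$, sign of the torsion linking form) is developed to the point where one could check it works. Several of them are doubtful as stated: for instance, a cover need not have torsion in $H_1$ at all, and fixing a primitive class $\alpha$ with the dual surface sent to its negative yields no contradiction by itself. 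Also, the virtual positivity of $b_1(\widetilde M/\partial \widetilde M)$ from \cite{AFW} plays no role in the hyperbolic case; it is needed only later for mixed manifolds.

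The missing idea in the paper's argument is a trace computation: if $\widetilde f \in \mathrm{Isom}(\mathbb{H}^3)$ is orientation reversing, then conjugation by $\widetilde f$ sends a loxodromic $\eta \in \mathrm{PSL}(2,\mathbb{C})$ to an element of trace $\overline{\mathrm{tr}(\eta)}$. By Theorem 3.3.7 of \cite{MR} one can choose a loxodromic $\gamma \in \pi_1(M)$ with $\mathrm{tr}(\gamma) \neq \overline{\mathrm{tr}(\gamma)}$ in $(\mathbb{C}\setminus[-2,2])/\pm 1$. Since trace determines complex length and $M$ has only finitely many geodesics of a given real length, only finitely many conjugacy classes $\gamma_1,\dots,\gamma_n$ in $\pi_1(M)$ have trace $\overline{\mathrm{tr}(\gamma)}$, and none of them is conjugate into the cyclic subgroup $H = \langle\gamma\rangle$ (a length comparison rules out conjugacy to powers $\gamma^{\pm n}$, $n\geq 2$). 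Subgroup conjugacy distinguishedness then produces a finite-index subgroup $\widetilde G = \phi^{-1}(\phi(H))$ containing $\gamma$ but meeting none of these conjugacy classes, so the corresponding cover $\widetilde M$ contains no element of trace $\overline{\mathrm{tr}(\gamma)}$; an orientation reversing isometry of $\widetilde M$ would normalize $\widetilde G$ and force $\widetilde f\gamma\widetilde f^{-1}\in\widetilde G$ to have exactly that forbidden trace. Without this (or some equally concrete) mechanism, your outline does not constitute a proof.
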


We first review a little bit of hyperbolic geometry here. Since $M$ is an orientable, finite volume hyperbolic $3$-manifold, $\pi_1(M)$ can be identified with a discrete subgroup $$\pi_1(M)= G<\text{Isom}_+(\mathbb{H}^3)\cong \text{PSL}(2,\mathbb{C}).$$ For any loxodromic element $\gamma\in G<\text{PSL}(2,\mathbb{C})$, its trace $\text{tr}(\gamma)$ is a well-defined complex number up to multiplying $\pm1$, and actually $$\text{tr}(\gamma)\in (\mathbb{C}\setminus [-2,2])/\pm 1.$$ The conjugacy class of $\gamma$ corresponds to a closed geodesic $c$ in $M$. The complex length ${\bf l}(c)$ of $c$ is defined by $${\bf l}(c)=l(c)+i\theta(c) \in \mathbb{C}/2\pi i\mathbb{Z},$$ where $l(c)>0$ is the (real) length of $c$, and $\theta(c)\in \mathbb{R}/2\pi \mathbb{Z}$ is the rotation angle of the loxodromic isometry $\gamma$. The half length $\frac{{\bf l}(c)}{2}$ is well-defined up to adding $\pi i$, thus $\frac{{\bf l}(c)}{2}\in \mathbb{C}/\pi i\mathbb{Z}$. We have the equation 
\begin{align}\label{tracelength}
2\cosh(\frac{{\bf l}(c)}{2})=\text{tr}(\gamma),
\end{align}
 where both sides are complex numbers well-defined up to multiplying $\pm 1$. Equation (\ref{tracelength}) also implies that ${\bf l}(c)$ is uniquely determined by $\text{tr}(\gamma)$.

To prove Proposition \ref{hyperbolictheorem}, we need the following definition in group theory.

\begin{definition}\label{distinguished}
A group $G$ is {\it subgroup conjugacy distinguished} if for any finitely generated subgroup $H<G$ and any $g\in G$ that is not conjugate to an element $H$, there exists a homomorphism $\phi:G\to Q$ to a finite group $Q$, such that $\phi(g)$ is not conjugate to an element in $\phi(H)$.
\end{definition}

By using Agol's virtual specialization theorem in \cite{Agol} (based on Wise's work in \cite{Wise}), Chagas and Zalesskii proved the following result in \cite{CZ}.

\begin{theorem}\label{distinguishedtheorem}
The fundamental group $\pi_1(M)$ of a hyperbolic compact $3$-manifold $M$ (closed or with cusps) is subgroup conjugacy distinguished.
\end{theorem}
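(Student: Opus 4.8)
It is cleanest to rephrase the conclusion as a separability statement. For a group $G$ and a subgroup $H$, let $H^{G}=\bigcup_{x\in G}x^{-1}Hx$ denote the set of elements that are conjugate into $H$. Using the identity $H^{G}\cdot K=\bigcup_{x\in G}x^{-1}(HK)x$ for a normal subgroup $K\trianglelefteq G$, together with the fact that in a finite quotient $G/K$ the conjugates of the image of $H$ are exactly the images of the conjugates of $H$, one checks that $G$ is subgroup conjugacy distinguished if and only if $H^{G}$ is closed in the profinite topology on $G$ for every finitely generated $H\le G$. So the aim is to show that $H^{G}$ is separable for every finitely generated $H\le G:=\pi_{1}(M)$. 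The starting point is Agol's virtual specialisation theorem \cite{Agol}, which rests on Wise's work \cite{Wise} (and, for cusped $M$, also on \cite{PW}): since $M$ is a finite-volume hyperbolic $3$-manifold, $G$ has a finite-index subgroup $G_{0}$ that is the fundamental group of a compact special cube complex. Since covers of special cube complexes are special, every finite-index subgroup of $G_{0}$ is again the fundamental group of a compact special cube complex, and by Haglund--Wise's canonical completion and retraction the fundamental group of such a complex is a retract of a finite-index subgroup of a right-angled Artin group.

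The reduction is finished by the elementary remark that the property ``$H^{(-)}$ is always separable'' passes to retracts: if $\rho\colon B\to G_{1}$ is a retraction, $H\le G_{1}$ is finitely generated, and $g\in G_{1}\setminus H^{G_{1}}$, then $g\notin H^{B}$ as well, since $b^{-1}gb\in H$ would force $\rho(b)^{-1}g\,\rho(b)=\rho(b^{-1}gb)\in\rho(H)=H$; hence a finite quotient of $B$ in which $g$ is not conjugate into $H$ restricts to such a finite quotient of $G_{1}$. So everything comes down to the claim that \emph{every finite-index subgroup $A'$ of a right-angled Artin group $A$ is subgroup conjugacy distinguished}, i.e., $H^{A'}$ is separable for every finitely generated $H\le A'$.

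This claim is where I expect the main difficulty to lie. Proving it should combine two ingredients. The first is the combinatorics of conjugacy in right-angled Artin groups: cyclic reduction of words, the Servatius and Crisp--Godelle description of centralisers in terms of the cyclically reduced core and a product of standard parabolic subgroups, the resulting criterion for two elements to be conjugate, and the retractions of $A$ onto its standard parabolic subgroups, which are themselves right-angled Artin groups. The second is Minasyan's separability machinery: right-angled Artin groups and their finite-index subgroups are LERF, are hereditarily conjugacy separable, and have separable double cosets of suitable finitely generated subgroups. The plan is to realise $H^{A'}$ as an intersection of profinitely closed sets: given $g\notin H^{A'}$, one projects $g$ and $H$ into a bounded collection of parabolic pieces so as to constrain the ``support'' of any hypothetical conjugator, and then separates inside those pieces using conjugacy separability and double-coset separability. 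The genuinely delicate case is when $H$ has infinite index and meshes with large free-abelian subgroups of $A$, so that $H^{A'}$ is an \emph{infinite} union of subgroups; there one must interleave the separability of $H$ itself with a conjugacy-separability argument, which is exactly the upgrade of the surface-group case treated by Chagas and Zalesskii in \cite{CZ}.

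Granting the claim, the retraction argument above shows that $G_{0}$, and hence every finite-index subgroup of $G_{0}$, is subgroup conjugacy distinguished; that is, $G_{0}$ is \emph{hereditarily} subgroup conjugacy distinguished, and we may further assume $G_{0}\trianglelefteq G$. The remaining step is a finite-index transfer: a group with a finite-index normal hereditarily subgroup-conjugacy-distinguished subgroup is itself subgroup conjugacy distinguished. This is not formal --- like conjugacy separability, the property can fail to ascend along finite extensions without a hereditary hypothesis --- but it follows by adapting the argument by which Minasyan, and Chagas and Zalesskii, deduce conjugacy separability of a group from hereditary conjugacy separability of a finite-index subgroup: one reduces the $G$-conjugacy problem for a pair $(H,g)$ to finitely many $G_{0}$-conjugacy problems, settles each using hereditary subgroup conjugacy distinguishability in $G_{0}$, and amalgamates the finitely many resulting finite quotients into a single finite quotient of $G$, using the residual finiteness of $G$ for the coset bookkeeping. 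Since Agol--Wise applies equally to closed and to cusped finite-volume hyperbolic $3$-manifolds, this yields the theorem in all cases.
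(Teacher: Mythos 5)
The theorem you are asked to prove is not proved in the paper at all: it is quoted from Chagas--Zalesskii \cite{CZ}, so the comparison can only be with their cited argument. Your reformulation (subgroup conjugacy distinguished means $H^G$ is closed in the profinite topology for every finitely generated $H$) and your observation that this property passes to retracts, hence from finite-index subgroups of right-angled Artin groups to virtually compact special groups via \cite{Agol}, \cite{Wise}, \cite{PW}, are both correct. But they reduce the theorem to exactly the claim you flag as the main difficulty --- that every finite-index subgroup $A'$ of a RAAG satisfies the conclusion for all finitely generated $H\le A'$ --- and that claim is false, so the proposed route cannot be completed. Take the RAAG on a $4$-cycle, $A=F_2\times F_2$, and the finite-index subgroup $A'=F_n\times F_n$. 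For a finitely presented group $L$ with unsolvable conjugacy problem and an epimorphism $\phi\colon F_n\to L$, the Mihailova subgroup $H=\{(u,v)\in F_n\times F_n:\ \phi(u)=\phi(v)\}$ is finitely generated, and $(u,v)$ is conjugate in $A'$ into $H$ if and only if $\phi(u)$ and $\phi(v)$ are conjugate in $L$. If $H^{A'}$ were separable, then ``is $(u,v)$ conjugate into $H$'' would be decidable: a positive answer is certified by enumerating conjugators together with words in the generators of $H$ and checking equality with the solvable word problem of $A'$, and a negative answer by enumerating finite quotients of the finitely presented group $A'$. That would solve the conjugacy problem of $L$, a contradiction. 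The same example undercuts your supporting assertion that RAAGs and their finite-index subgroups are LERF: $F_2\times F_2$ is famously not LERF, and LERF holds only for RAAGs whose graphs avoid induced squares and paths of length three.

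This is why the hyperbolic $3$-manifold hypothesis cannot be discarded after the first reduction. What makes $\pi_1(M)$ tractable is not virtual specialness alone but the tameness of its finitely generated subgroups: every finitely generated $H\le\pi_1(M)$ is either geometrically finite --- hence relatively quasiconvex and a virtual retract by the Agol--Wise machinery, where hereditary conjugacy separability and double-coset separability of virtually special groups can be brought to bear --- or a virtual fiber, which must be handled separately using the fibration. It is this $3$-manifold-specific dichotomy, not a general statement about finitely generated subgroups of RAAGs, that the cited proof in \cite{CZ} relies on; a reduction that forgets $M$ and admits arbitrary finitely generated subgroups of RAAGs runs straight into the Mihailova pathology above. (Your closing finite-index transfer step is also only sketched, but it is the RAAG claim that is fatal.)
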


Now we are ready to prove Proposition \ref{hyperbolictheorem}.
\begin{proof}
Since $M$ is a finite volume hyperbolic $3$-manifold, by Theorem 3.3.7 of \cite{MR}, there exists a loxodromic element $\gamma\in G=\pi_1(M)$ such that $\text{tr}(\gamma^2)$ is a complex number with non-zero imaginary part. So neither the real part nor the imaginary part of $\text{tr}(\gamma)$ is zero, and $$\text{tr}(\gamma)\ne \overline{\text{tr}(\gamma)}\in (\mathbb{C}\setminus [-2,2])/\pm1,$$
where $\overline z$ is the complex conjugation of $z \in \mathbb{C}$.

Let $c$ be the closed geodesic corresponding to the conjugacy class of $\gamma$. Since $M$ has only finitely many closed geodesics of bounded length, there are only finitely many distinct closed geodesics $c_1,\cdots,c_n$ in $M$, such that a representative $\gamma_j\in G$ of the conjugacy class corresponding to $c_j$ satisfies $\text{tr}(\gamma_j)=\overline{\text{tr}(\gamma)}$ for all $j$. Thus any element in $G$ with trace $\overline{\text{tr}(\gamma)}$ is conjugate to one of $\gamma_1,\cdots, \gamma_n$.

Since $\text{tr}(\gamma_j)\ne \text{tr}(\gamma)$, we know that $\gamma_j$ is not conjugate to $\gamma^{\pm 1}$. Moreover, by equation (\ref{tracelength}), $\text{tr}(\gamma_j)=\overline{\text{tr}(\gamma)}$ implies that ${\bf l}(c_j)=\overline{{\bf l}(c)}$. So $l(c_j)=l(c)>0$, $l(c_j)\ne n\cdot l(c)$ for any integer $n\geq 2$, and $\gamma_j$ is not conjugate to $\gamma^{\pm n}$. Let $H$ be the cyclic subgroup of $G$ generated by $\gamma$, then for each $j$, $\gamma_j$ is not conjugate to an element in $H$.

By Theorem \ref{distinguishedtheorem}, for each $j$, there exists a homomorphism $$\phi_j:G\to Q_j$$ to some  finite group $Q_j$, such that $\phi_j(\gamma_j)$ is not conjugate to an element in $\phi_j(H)$. Then we have the following homomorphism
$$\phi=\oplus_{i=1}^n \phi_i:G\to Q=\oplus_{i=1}^n Q_i,$$
where $Q$ is a finite group. For any $j$, $\phi(\gamma_j)$ is not conjugate to an element in $\phi(H)$. 

Since $Q$ is a finite group, $\widetilde{G}=\phi^{-1}(\phi(H))$ is a finite index subgroup of $G$, and the conjugacy class of $\gamma_j$ in $G$ is disjoint from $\widetilde{G}$ for all $j$. So there is no element in $\widetilde{G}$ whose trace equals $\overline{\text{tr}(\gamma)}$. 

Let $\widetilde{M}$ be the finite cover of $M$ corresponding to 
$$\widetilde{G}<G=\pi_1(M)<\text{Isom}_+(\mathbb{H}^3).$$ 
Let $N(\widetilde G)$ be the normalizer of $\tilde G$ in $\text{Isom}(\mathbb{H}^3)$, then we have $$\text{Isom}(\widetilde M)=N(\widetilde G)/ \widetilde G.$$

Suppose there exists an orientation reversing self-homeomorphism $f:\widetilde{M}\to \widetilde{M}$, by the Mostow--Prasad rigidity, $f$ is homotopic to an orientation reversing self-isometry of $\widetilde{M}$ and we still denote it by $f$. Lifting $f$ to the universal cover $\mathbb{H}^3$ of $\widetilde{M}$, we get an orientation reversing isometry $\widetilde{f}\in \text{Isom}(\mathbb{H}^3)$ and it lies in the normalizer  $$N(\widetilde{G})<\text{Isom}(\mathbb{H}^3),$$ and in particular $\widetilde{f}\cdot \gamma \cdot \widetilde{f}^{-1}\in \widetilde{G}$.

Now we consider $\text{Isom}(\mathbb{H}^3)$ as the group of angle-preserving self-homeomorphisms of the Riemann sphere $\overline{\mathbb{C}}$. We can write $\widetilde{f}=h\cdot g$ with $h(z)=\overline{z}$ and $g\in \text{Isom}_+(\mathbb{H}^3)$. 
For  $\eta \in PSL_2(C)$, suppose  $$\eta(z)= \frac{az+b}{cz+d}.$$ Then 
 $$h\circ \eta \circ  h^{-1}(z)=h\circ \eta (\overline z)= h( \frac{a\overline z+b}{c\overline z+d})=\frac{ \overline a z+\overline b}{\overline c z+\overline d}.$$
So $$\text{tr}(h\cdot \eta\cdot h^{-1})
= \overline{\text{tr}(\eta)}.\qquad (3.2)$$

Then 
\begin{align*}
\text{tr}(\widetilde{f}\cdot \gamma \cdot \widetilde{f}^{-1})=\text{tr}(h\cdot (g\cdot \gamma \cdot g^{-1})\cdot h^{-1})
= \overline{\text{tr}(g\cdot \gamma \cdot g^{-1})}=\overline{\text{tr}(\gamma)}.
\end{align*}
Here the second equality holds by (3.2), the third equality holds since $g\in \text{Isom}_+(\mathbb{H}^3)$.

Now we get an element $\widetilde{f}\cdot \gamma\cdot \widetilde{f}^{-1}\in \widetilde{G}$ whose trace equals $\overline{\text{tr}(\gamma)}$, which contradicts with our choice of $\widetilde{G}$. So $\widetilde{M}$ admits no orientation reversing self-homeomorphism, and the proof is done.
\end{proof}

\begin{remark}\label{sol} 
We end this section by giving more explanations  on Theorem \ref{main} for three other geometries,
especially for Sol  3-manifolds.

Theorem \ref{main} holds for $\widetilde{\text{PSL}(2,\mathbb{R})}$ and $\text{Nil}$ 3-manifolds, because $\widetilde{\text{PSL}(2,\mathbb{R})}$ and $\text{Nil}$ $3$-manifolds do not admit degree $-1$ self-maps (e.g. see Theorems 1.3 and 1.4 of \cite{SWWZ}, also Lemma 3.4 of \cite{NWW}).

Theorem \ref{main} for Sol 3-manifolds  is contained in  Corollary 3.9 of \cite{TWW} and its proof, as explain below.
Each  orientable $\text{Sol}$ $3$-manifold $M$ is covered by $M_\phi$, a torus bundle over the circle whose monodromy is a matrix $\phi\in \text{SL}(2,\mathbb{Z})$ with $|\text{tr}(\phi)|>2$.
Since we work on virtual properties of $3$-manifolds, we may assume that $M=M_{\phi}$.
In the proof of Corollary 3.9 of \cite{TWW}, it is indeed proved that any $\text{Sol}$ $3$-manifold $M_\phi$ is commensurable to a chiral $3$-manifold $M_\psi$. Moreover
 $M_\psi$ and $M_{\psi^n}$ have the same chirality for $n\ne 0$ (based on \cite[Theorem 3.7 and Lemma 3.3 (iii)]{TWW});
and  for some $m, n \ne 0$, there is a finite cover $M_{\psi^n}\to M_{\phi^m}$ (based on \cite[Lemma 2.6 (1)]{TWW}. 
Then we find a finite cover $M_{\psi^n}\to M_{\phi^m}\to M_{\phi}$ and $M_{\psi^n}$ is chiral.
\end{remark}

\section{The case of mixed $3$-manifolds}\label{mixed}

We will prove the mixed $3$-manifold case of Theorem \ref{main} in this section, and the hyperbolic case in Section \ref{hyperbolic} will play an essential role. We restate the result here.

\begin{proposition}\label{mixedtheorem}
Let $M$ be a compact, connected, orientable, irreducible $3$-manifold with empty or tori boundary. Suppose $M$ is a mixed $3$-manifold, then $M$ has a finite cover $\widetilde{M}$ that is chiral.
\end{proposition}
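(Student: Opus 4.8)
The plan is to exploit the canonical nature of the geometric decomposition: any self-homeomorphism of $M$ (in particular any orientation reversing one) must permute the geometric pieces and preserve the JSJ tori, and this persists under passage to finite covers that are compatible with the decomposition. So the strategy is to find a finite cover $\widetilde{M}$ in which \emph{some} hyperbolic piece $\widetilde{J}$ is chiral \emph{as a manifold with marked boundary}, and moreover is the only piece of its homeomorphism type sitting in its position of the JSJ graph, so that any self-homeomorphism of $\widetilde{M}$ would restrict to an orientation reversing self-homeomorphism of $\widetilde{J}$ fixing its boundary pattern — contradicting chirality of $\widetilde{J}$.

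The key steps, in order, would be as follows. First, fix a hyperbolic piece $J_0$ of $M$. Using Proposition \ref{hyperbolictheorem} (more precisely, its proof via Theorem \ref{distinguishedtheorem}), produce a finite cover $\widehat{J_0}\to J_0$ that is chiral; in fact one wants a slightly stronger conclusion, namely that $\widehat{J_0}$ admits no orientation reversing self-homeomorphism \emph{even allowing permutation of its cusps}, which follows from the same trace argument since the obstructing trace value $\overline{\mathrm{tr}(\gamma)}$ is a conjugacy invariant of $\pi_1(\widehat{J_0})$ independent of any cusp. Second, realize $\widehat{J_0}$ inside a finite cover of $M$: by the standard theory of extending finite covers of JSJ pieces to finite covers of the ambient $3$-manifold (this is where one invokes the constructions of finite coverings of mixed $3$-manifolds from \cite{DLW} and \cite{Liu}, which rest on Agol's and Wise's theorems), there is a finite cover $M'\to M$ in which at least one elevation $J'$ of $J_0$ is covered by (a further cover of) $\widehat{J_0}$; arranging the covers of the hyperbolic and Seifert pieces to be ``large'' enough along the JSJ tori, we get a finite cover $\widetilde{M}\to M$ containing a chiral hyperbolic piece $\widetilde{J}$. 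Third, and this is the crucial combinatorial input, pass to a further characteristic cover so that in $\widetilde{M}$ the piece $\widetilde{J}$ is ``rigid in position'': no self-homeomorphism of $\widetilde{M}$ can send $\widetilde{J}$ to a different piece. One way to force this is to make $\widetilde{J}$ have strictly larger volume than every other hyperbolic piece, or more robustly to use the JSJ graph structure — e.g. arrange that $\widetilde{J}$ is the unique piece adjacent to a Seifert piece with a prescribed (non-symmetric) fiber-intersection datum $b_T$ — so that the $\widetilde{J}$-orbit under any homeomorphism is $\{\widetilde{J}\}$. Finally, combine: an orientation reversing $f:\widetilde{M}\to\widetilde{M}$ would restrict to $f|_{\widetilde{J}}:\widetilde{J}\to\widetilde{J}$, which is orientation reversing (degree of $f|$ on a piece equals the degree of $f$, by the orientation conventions recalled in Section \ref{preliminary}), contradicting chirality of $\widetilde{J}$; hence $\widetilde{M}$ is chiral.

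I expect the main obstacle to be the second and third steps working \emph{together}: one needs a single finite cover $\widetilde{M}\to M$ that simultaneously (a) restricts to a chiral cover on one hyperbolic piece and (b) has enough asymmetry in its JSJ graph (or enough disparity in piece volumes) that the chosen piece cannot be permuted away. Step (a) and step (b) each require choosing the covering degrees and the induced covers of the \emph{other} pieces appropriately, and these choices interact through the gluing conditions along the JSJ tori — in particular, a finite cover that is complicated enough on $J_0$ may force branched or disconnected behavior on neighboring Seifert pieces that accidentally restores symmetry. Managing this bookkeeping, i.e. showing that the finite-cover extension machinery of \cite{DLW}, \cite{Liu} is flexible enough to realize both properties at once (possibly after first replacing $M$ by the canonical double cover with no twisted $I$-bundle over the Klein bottle, as in Section \ref{preliminary}), is where the real work lies; by contrast, the trace argument of Section \ref{hyperbolic} and the degree-of-restriction observation are essentially routine once the cover is in hand.
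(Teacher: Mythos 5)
Your overall architecture matches the paper's: produce a chiral finite cover of one hyperbolic piece via Proposition \ref{hyperbolictheorem}, embed it into a finite cover of $M$ using the machinery of \cite{DLW} and \cite{Liu}, arrange that an orientation reversing self-homeomorphism of the total space must restrict to an orientation reversing self-homeomorphism of that piece, and derive a contradiction. (Your remark that the chirality from Proposition \ref{hyperbolictheorem} needs no boundary marking is correct, and the paper indeed uses it in that unmarked form.) However, the step you yourself flag as ``where the real work lies'' is a genuine gap, and the specific mechanism you propose for it points in a direction that the available covering machinery cannot support. You suggest making the chiral piece $\widetilde{J}$ have \emph{strictly larger} volume than every other hyperbolic piece. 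But in a degree-$D$ cover of $M$ the elevations of the \emph{other} hyperbolic pieces can have degree anywhere up to $D$: Proposition \ref{DLWconstruction} only guarantees that each elevation \emph{factors through} a prescribed cover (a lower bound on its degree), and Lemma \ref{Liuconstruction} gives no control at all over the pieces other than the one being lifted. So there is no way to cap the volumes of the other elevations from above, and your fallback (a non-symmetric $b_T$ at an adjacent Seifert piece) does not apply when the mixed manifold has no Seifert pieces at all.

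The missing idea is to go in the opposite direction: keep the chiral piece \emph{small} and enlarge everything else, and this requires a homological input you do not invoke. The paper uses the virtual positivity of $c(N)=\operatorname{rank}_{\mathbb Z}\bigl(\operatorname{coker}(H_1(\partial N)\to H_1(N))\bigr)$ for cusped hyperbolic manifolds (Proposition \ref{positivec}, from item (C.13) of \cite{AFW}) to get, after a preliminary cover, a class in $H^1$ of each hyperbolic piece other than $J_0''$ that vanishes on its boundary. These classes assemble, via Mayer--Vietoris, into a surjection $\psi\colon H_1(M'')\to\mathbb Z/k\mathbb Z$ that is trivial on $\pi_1(J_0'')$ and on all Seifert pieces but surjective on each remaining hyperbolic piece. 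In the resulting $k$-fold cyclic cover $\widetilde M$, every elevation of $J_0''$ is a degree-one (orientation-preserving) homeomorphic copy of $J_0''$, every elevation of another hyperbolic piece is connected of volume $k\cdot\mathrm{vol}>\mathrm{vol}(J_0'')$ for $k$ large, and Seifert pieces stay Seifert; so any orientation reversing $\widetilde h$ must carry a copy of $J_0''$ to another copy of $J_0''$, and composing with the two degree-one covering projections yields an orientation reversing self-homeomorphism of $J_0''$ itself --- a contradiction. Note also that your requirement that $\widetilde J$ be ``rigid in position'' is stronger than necessary: the chiral piece may well be permuted among its own copies, and the contradiction is extracted by projecting back down rather than by fixing the piece setwise.
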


In this and the next sections, we need to construct  finite covers of a non-geometric $3$-manifold $M$ with required properties.
To construct a finite cover of a non-geometric $3$-manifold $M$, we usually start with finite covers of one or more geometric pieces of $M$, and want to construct a finite cover based on the given data. We recall two such constructions. Both of them rely on Agol's virtual specialization theorem (\cite{Agol}), which is based on Wise work on cube complexes (\cite{Wise}), as well as Przytycki and Wise's work \cite{PW}.

The first construction is Proposition 4.2 of \cite{DLW}, which has some control on all geometric pieces of the finite cover. We only state a slightly weaker result here, which is good enough for our applications.
\begin{proposition}\label{DLWconstruction}
Let $M$ be a compact, orientable, irreducible $3$-manifold with empty or tori boundary. Suppose $\hat{J}_1,\cdots, \hat{J}_s$ are finite covers of all JSJ pieces $J_1, . . . , J_s$ of M, respectively. Then there is a finite cover $M'$ of $M$, such that any elevation $J_i'$ of a JSJ piece $J_i$ in $M'$ is a cover of $J_i$ that factors through $\hat{J}_i$.
\end{proposition}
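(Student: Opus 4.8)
The plan is to reduce the statement to a subgroup separability property of $\pi_1(M)$, and then run a standard normal-core argument. I would first make two harmless reductions. By covering space theory it suffices to find, for each fixed $i$, a finite cover $M'_i\to M$ in which every elevation of $J_i$ factors through $\hat J_i$: if $\pi_1(M')=\bigcap_{i=1}^s\pi_1(M'_i)$, a finite index subgroup of $\pi_1(M)$, then since $\pi_1(M')\le\pi_1(M'_i)$ every elevation of $J_i$ in $M'$ maps to an elevation of $J_i$ in $M'_i$, hence factors through $\hat J_i$. Fix then one piece $J=J_i$; its fundamental group $\pi_1(J)$ is $\pi_1$-injective in $\pi_1(M)$ because the JSJ tori are incompressible, so $\hat J\to J$ corresponds to a finite index subgroup $\pi_1(\hat J)\le\pi_1(J)\le\pi_1(M)$. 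Replacing $\pi_1(\hat J)$ by its normal core $N:=\bigcap_{h\in\pi_1(J)}h\,\pi_1(\hat J)\,h^{-1}$, a finite index normal subgroup of $\pi_1(J)$ contained in $\pi_1(\hat J)$ and finitely generated since $\pi_1(J)$ is, we are reduced to producing a finite cover of $M$ in which every elevation of $J$ has fundamental group conjugate into $N$.

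The key ingredient, and the main obstacle, is the separability of $N$ in $\pi_1(M)$: for a JSJ piece of a compact irreducible $3$-manifold with empty or tori boundary, the fundamental group of the piece and all of its finite index subgroups are separable in $\pi_1(M)$. When $M$ is hyperbolic or mixed this holds because $\pi_1(M)$ is virtually special, hence LERF, by Agol's virtual specialization theorem \cite{Agol} together with \cite{Wise} and \cite{PW}; when $M$ is a graph manifold it holds because finite index subgroups of Seifert piece subgroups are separable in graph manifold groups. Granting this, I would run the usual separation argument. Since $[\pi_1(J):N]<\infty$, write the finite set $\pi_1(J)\setminus N$ as a union of $N$-cosets with representatives $h_1,\dots,h_k$. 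Separability of $N$ gives homomorphisms $\psi_j:\pi_1(M)\to Q_j$ onto finite groups with $\psi_j(h_j)\notin\psi_j(N)$; set $\psi:=\prod_{j=1}^k\psi_j$ and $G_1:=\psi^{-1}(\psi(N))$. Then $G_1$ has finite index in $\pi_1(M)$ and $G_1\cap\pi_1(J)=N$: if some $h\in\pi_1(J)\setminus N$ lay in $G_1$, writing $h=h_jn$ with $n\in N$ would give $\psi(h_j)=\psi(h)\psi(n)^{-1}\in\psi(N)$, contradicting $\psi_j(h_j)\notin\psi_j(N)$.

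Finally I would pass to the normal core $G':=\bigcap_{g\in\pi_1(M)}gG_1g^{-1}$, a finite index normal subgroup of $\pi_1(M)$, and take $M'_i\to M$ to be the corresponding finite cover. The elevations of $J$ in $M'_i$ are classified by the double cosets $G'\backslash\pi_1(M)/\pi_1(J)$, and the fundamental group of the elevation attached to $G'g\pi_1(J)$ is $\pi_1(J)\cap g^{-1}G'g=\pi_1(J)\cap G'$ by normality of $G'$; since $G'\le G_1$, this subgroup lies in $G_1\cap\pi_1(J)=N\le\pi_1(\hat J)$, so every elevation of $J$ in $M'_i$ factors through $\hat J$. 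Doing this for each $i$ and intersecting the groups $\pi_1(M'_i)$ as in the first paragraph produces the desired finite cover $M'$ of $M$. Everything here is covering space theory and elementary finite group theory once the separability of $N$ in $\pi_1(M)$ is available — that is the single point where the deep input (Agol--Wise specialization, Przytycki--Wise) enters, and it is the step I expect to be the real obstacle.
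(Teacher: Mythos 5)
There is no in-paper proof to compare against: the paper states this result as a (weakened) quotation of Proposition 4.2 of \cite{DLW} and simply cites it, so your attempt has to be judged on its own. The reduction you set up is the standard and correct one: replace $\pi_1(\hat J_i)$ by its normal core $N_i$ in $\pi_1(J_i)$, find a finite-index subgroup $G_1\le\pi_1(M)$ with $G_1\cap\pi_1(J_i)\le N_i$, pass to the normal core $G'$ in $\pi_1(M)$, identify elevations with double cosets $G'\backslash\pi_1(M)/\pi_1(J_i)$, and intersect the finitely many covers $M_i'$. All of that bookkeeping is sound.

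The genuine gap is the justification of the one substantive input, which you yourself flag as the real obstacle. You claim that for hyperbolic or mixed $M$ the group $\pi_1(M)$ is ``virtually special, hence LERF.'' Virtual specialness does not imply LERF, and the conclusion is in fact false: mixed $3$-manifold groups are never LERF (a theorem of the first author of this paper, Sun, on non-LERFness of mixed $3$-manifold groups), and many graph-manifold groups are not LERF either (Niblo--Wise, going back to the Burns--Karrass--Solitar example); the hyperbolic case, where LERF does hold by Agol, is the one case where the proposition is vacuous because the JSJ decomposition is trivial. So as written the crucial separability step rests on a false statement. What rescues the strategy is that you need far less than LERF: only that finite-index subgroups of JSJ-piece subgroups are separable in $\pi_1(M)$, equivalently that the profinite topology of $\pi_1(M)$ induces the full profinite topology on each $\pi_1(J_i)$. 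This is the efficiency of the JSJ/geometric decomposition, due to Wilton and Zalesskii (with extensions to the bounded case), and it is also essentially what the machinery behind Proposition 4.2 of \cite{DLW} provides, building on \cite{Agol}, \cite{Wise}, \cite{PW}. With that input correctly invoked your argument closes; with the LERF claim it does not.
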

Here an elevation of $J_i$ in $M'$ means a connected component of the preimage of $J_i$ in $M'$.

The second construction is Lemma 6.2 of \cite{Liu}, which has a precise control on one geometric piece of the finite cover. We only state the result in some special cases, which avoids some undefined terminology.
\begin{lemma}\label{Liuconstruction}
Let $M$ be a compact, orientable, irreducible $3$-manifold with empty or tori boundary. If $J'$ is a connected finite cover of a geometric piece or a JSJ torus of $M$, then $J'$ has an embedded lift into a finite cover $M'$ of $M$.
\end{lemma}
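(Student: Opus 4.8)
The plan is to deduce the statement from subgroup separability in $\pi_1(M)$. Set $G=\pi_1(M)$ and let $J$ be the geometric piece or JSJ torus of which $J'$ is a finite cover, with covering map $q\colon J'\to J$. Since $M$ is irreducible with empty or tori boundary, the JSJ tori are incompressible and each piece (as well as each JSJ torus) is $\pi_1$-injective, so I may regard $H:=\pi_1(J)$ as a subgroup of $G$, well defined up to conjugacy, and $H_0:=q_*\pi_1(J')$ as a finite-index subgroup of $H$. The goal then becomes: produce a finite-index subgroup $G_0\le G$ with $G_0\cap H=H_0$. Indeed, if $M_0\to M$ is the cover corresponding to $G_0$, then the preimage of the embedded submanifold $J$ is an embedded submanifold of $M_0$, and the component of this preimage through the base point is the cover of $J$ associated to $G_0\cap H=H_0$, i.e.\ a copy of $J'$; this is the desired embedded lift, so $M':=M_0$ works.

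To produce $G_0$, I would first note that $H_0$, being of finite index in $H$, is closed in the profinite topology of $H$. The key input is that it is in fact closed in the topology on $H$ induced from the profinite topology of $G$; equivalently, that $G$ induces the full profinite topology on $H$, which is a manifestation of the efficiency of the geometric/JSJ decomposition of $3$-manifold groups. Granting this, choose representatives $h_1,\dots,h_{n-1}$ for the non-trivial cosets of $H_0$ in $H$; separability provides, for each $i$, a finite-index subgroup $G_i\le G$ with $H_0\le G_i$ and $h_i\notin G_i$, and then $G_0:=G_1\cap\cdots\cap G_{n-1}$ satisfies $H_0\le G_0\cap H$ and $h_i\notin G_0\cap H$ for all $i$, hence $G_0\cap H=H_0$.

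The main obstacle is exactly this separability input. When $M$ is hyperbolic or mixed, $\pi_1(M)$ is LERF (by the work of Wise and Agol in the hyperbolic case, and of Przytycki--Wise in the mixed case), so every finitely generated subgroup, in particular $H_0$, is separable, and one even gets the induced-topology statement; this is essentially how the cited Lemma~6.2 of \cite{Liu} proceeds in those cases. For graph manifolds $\pi_1(M)$ need not be LERF, so one cannot invoke separability of arbitrary subgroups; instead one must use that the subgroups carried by geometric pieces and by JSJ tori are separable, together with good behaviour of the profinite topology under the decomposition, which rests on the cube-complex machinery of \cite{Wise}, \cite{Agol}, and \cite{PW} as packaged in \cite{Liu}. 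An alternative, more constructive route is to build $M'$ from the graph-of-spaces structure of $M$: place the cover $J'$ over the distinguished vertex, inductively cover each neighbouring piece by a finite cover agreeing with the already chosen covers of the shared JSJ tori (using separability of peripheral torus subgroups in Seifert and hyperbolic pieces, and the extension of boundary covers to covers of a piece), and take fibre products to enforce global compatibility; the delicate point there, again supplied by the cited work, is ensuring that this process terminates.
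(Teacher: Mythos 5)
This lemma is not proved in the paper at all: it is quoted verbatim as Lemma~6.2 of \cite{Liu}, so there is no in-paper argument to compare against. Your overall architecture is the standard one and is sound: reducing the statement to finding a finite-index $G_0\le G=\pi_1(M)$ with $G_0\cap H=H_0$ is correct (embeddedness of the elevation is automatic since $J$ is embedded in $M$, so the only content is controlling which cover of $J$ the elevation is), and your coset-representative argument correctly converts separability of $H_0$ in $G$ into the exact equality $G_0\cap H=H_0$.

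The genuine error is in the separability input for the mixed case. You assert that $\pi_1(M)$ is LERF when $M$ is mixed, "by Przytycki--Wise." This is false: Przytycki--Wise prove virtual specialness, not LERF, and in fact mixed $3$-manifold groups are \emph{never} LERF --- a theorem of the first author of this very paper (Sun, \emph{NonLERFness of arithmetic hyperbolic manifold groups and mixed $3$-manifold groups}). So in both cases where the lemma has content (mixed and graph manifolds --- for geometric $M$ the statement is vacuous), LERF is unavailable and you cannot separate "arbitrary finitely generated subgroups." What saves the argument is that $H_0$ is not arbitrary: it is a finite-index subgroup of a JSJ vertex or edge group, and for such subgroups separability in $G$, together with the statement that $G$ induces the full profinite topology on $H$, follows from the efficiency of the JSJ decomposition (Wilton--Zalesskii), which you do name as the relevant notion. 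So the proof is repairable by replacing the LERF claim with efficiency throughout, but as written the mixed case rests on a false premise. For the record, Liu's actual proof is closer to your "alternative, more constructive route": it assembles $M'$ piece by piece over the dual graph using the virtually special machinery of \cite{Wise}, \cite{Agol}, and \cite{PW}, rather than invoking a single global separability statement.
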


The following quantity will play an important role in this section. 

Let $N$ be a compact, connected $3$-manifold with boundary, we define 
\begin{align}\label{c}
c(N)=\text{rank}_{\mathbb{Z}}\big(\text{coker}(H_1(\partial N)\to H_1(N))\big),
\end{align}
where $\text{coker}(H_1(\partial N)\to H_1(N))=H_1(N)/ \text{im}(H_1(\partial N))$.

Item (C.13) of \cite{AFW} implies the following result on virtual positivity of $c(N)$, which relies on Agol's virtual specialization theorem (\cite{Agol}).
\begin{proposition}\label{positivec}
Let $N$ be a finite volume complete hyperbolic $3$-manifold with cusps, then for any $k\in \mathbb{N}$, $N$ has a finite cover $N'$, such that $c(N')\geq k$.
\end{proposition}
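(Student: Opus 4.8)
The plan is to reduce Proposition~\ref{positivec} to item~(C.13) of \cite{AFW} after first translating the quantity $c(N)$ into a more familiar invariant. The key observation is that for a compact orientable $3$-manifold $N$ whose boundary is a disjoint union of $k$ tori one has $c(N)=b_1(N)-k=b_1(N/\partial N)$, where $N/\partial N$ denotes the space obtained by pinching each boundary torus to a separate point. The first equality is the ``half lives, half dies'' principle: Poincar\'e--Lefschetz duality forces $\text{im}(H_1(\partial N;\mathbb{Q})\to H_1(N;\mathbb{Q}))$ to have dimension $\tfrac12\dim_{\mathbb{Q}}H_1(\partial N;\mathbb{Q})=k$, so $\text{coker}(H_1(\partial N)\to H_1(N))$ has rank $b_1(N)-k$. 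For the second equality, glue a cone onto each boundary torus $T_i$ of $N$ to form $Z=N\cup_{\partial N}\big(\bigsqcup_i \mathrm{Cone}(T_i)\big)$; collapsing the (componentwise contractible) cones is a homotopy equivalence $Z\simeq N/\partial N$, and the reduced Mayer--Vietoris sequence of $Z$ gives $H_1(Z)\cong\text{coker}(H_1(\partial N)\to H_1(N))$ once one notes that $\widetilde H_0(\partial N)\to\widetilde H_0\big(\bigsqcup_i \mathrm{Cone}(T_i)\big)$ is an isomorphism.

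With this reformulation, Proposition~\ref{positivec} becomes the statement that a finite volume cusped hyperbolic $3$-manifold $N$ has, for every $k\in\mathbb{N}$, a finite cover $N'$ with $b_1(N'/\partial N')\geq k$. This is precisely item~(C.13) of \cite{AFW}, which I would simply cite; as indicated there, it rests on Agol's virtual specialization theorem \cite{Agol} (hence ultimately on Wise's work \cite{Wise}), via the virtual RFRS property of $\pi_1(N)$.

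I expect the genuinely hard input to be this last step, i.e.\ the reason one needs the full strength of \cite{Agol} rather than just virtual fibering. A degree-$d$ finite cover can multiply the number of boundary tori by up to $d$, so to push $b_1(N'/\partial N')$ above a large $k$ one needs $b_1$ of the cover to outgrow its cusp count. The obvious constructions do not suffice: if $N'$ fibers with fiber class $\phi$ then $c(N')\geq 1$, but the cyclic covers $N'_n$ dual to $\phi$ have $b_1(N'_n/\partial N'_n)$ bounded by $b_1$ of the fiber surface, so $c$ cannot be bootstrapped upward this way; and abelian covers, which (because the peripheral subgroups are copies of $\mathbb{Z}^2$) are essentially the only covers in which a cusp can fail to split, do not have $b_1$ growing fast enough either. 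Obtaining covers in which $b_1$ genuinely outpaces the cusp count is exactly what the virtual specialization machinery provides, and is what (C.13) packages.
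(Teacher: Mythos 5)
Your proposal is correct and takes essentially the same route as the paper: the paper likewise treats $c(N)$ as the first Betti number of $N/\partial N$ (each boundary torus pinched to its own point) and simply cites item (C.13) of \cite{AFW}, offering no further argument. Your translation $c(N)=b_1(N)-k=b_1(N/\partial N)$ via half-lives-half-dies and the cone/Mayer--Vietoris computation is sound, and the closing paragraph is just (harmless) commentary rather than part of the proof.
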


For the sake of our application, we only care about the positivity of $c(N)$. If $c(N)>0$ holds, then there is a surjective homomorphism $H_1(N)\to \mathbb{Z}$ that vanishes on the image of $H_1(\partial N)\to H_1(N)$. Note that if we have a finite cover $N'$ of $N$, then $c(N)>0$ implies $c(N')>0$.

Now we are ready to prove Proposition \ref{mixedtheorem}.
\begin{proof}
The proof consists of a sequence of constructions of finite covers of the mixed $3$-manifold $M$.

{\bf Step I}. We claim that $M$ has a finite cover $M'$, such that any hyperbolic piece $J'$ of $M'$ satisfies $c(J')>0$. 

Let $J_{1,\text{hyp}},\cdots, J_{r,\text{hyp}}$ be all hyperbolic pieces of $M$, with $r\geq 1$ holds, and let $J_{1,\text{Sei}},\cdots, J_{s,\text{Sei}}$ be all Seifert pieces of $M$. By Proposition \ref{positivec}, each hyperbolic piece $J_{i,\text{hyp}}$ has a finite cover $\hat{J}_{i,\text{hyp}}$ such that $c(\hat{J}_{i,\text{hyp}})>0$. For each Seifert piece $J_{i,\text{Sei}}$, we take $\hat{J}_{i,\text{Sei}}=J_{i,\text{Sei}}$. Then Proposition \ref{DLWconstruction} provides us a finite cover $M'$ of $M$, such that any hyperbolic piece $J'$ of $M'$ is a finite cover of some $\hat{J}_{i,\text{hyp}}$, so it satisfies $c(J')>0$

{\bf Step II}. We claim that $M'$ has a finite cover $M''$, such that any hyperbolic piece $J''$ of $M''$ satisfies $c(J'')>0$, and it has a hyperbolic piece $J''_{0,\text{hyp}}$ that admits no orientation reversing self-homeomorphism.

Take a hyperbolic piece $J_{0,\text{hyp}}'$ of $M'$. By Proposition \ref{hyperbolictheorem}, $J_{0,\text{hyp}}'$ has a finite cover $J''_{0,\text{hyp}}$ that admits no orientation reversing self-homeomorphism. By Lemma \ref{Liuconstruction}, $J_{0,\text{hyp}}''$ embedded lifts into a finite cover $M''$ of $M'$. Since any hyperbolic piece $J'$ of $M'$ satisfies $c(J')>0$, so does any hyperbolic piece $J''$ of $M''$.

{\bf Step III}. We finish the proof in this step.

Let $J_{0,\text{hyp}}'',J_{1,\text{hyp}}'',\cdots, J_{r,\text{hyp}}''$ be all hyperbolic pieces of $M''$, and let $J_{1,\text{Sei}}'',\cdots,J_{s,\text{Sei}}'' $ be all Seifert pieces of $M''$. For any $i=1,\cdots, r$, since $c(J_{i,\text{hyp}}'')>0$, there exists a surjective homomorphism 
$$\phi_i:H_1(J_{i,\text{hyp}}'')\to \mathbb{Z}$$ that vanishes on $$\text{im}\big(H_1(\partial J_{i,\text{hyp}}'')\to H_1(J_{i,\text{hyp}}'')\big).$$

By the M-V sequence, we know that $H_1(M'')$ is isomorphic to 
$$\text{im}\Big(H_1(J_{0,\text{hyp}}'')\oplus (\oplus_{i=1}^rH_1(J_{i,\text{hyp}}''))\oplus(\oplus_{j=1}^s H_1(J_{j,\text{Sei}}''))\to H_1(M'')\Big)\oplus \mathbb{Z}^{1-\chi},$$ 
where the homomorphism is induced by inclusions of submanifolds, and $\chi$ is the Euler characteristic of the dual graph of $M''$. 

Take a positive integer $k$, so that $$k\cdot \text{vol}(J_{i,\text{hyp}}'')>\text{vol}(J_{0,\text{hyp}}'')$$ for all $i=1,\cdots,r$. We first construct a surjective homomorphism $$\phi:H_1(J_{0,\text{hyp}}'')\oplus (\oplus_{i=1}^rH_1(J_{i,\text{hyp}}''))\oplus(\oplus_{j=1}^s H_1(J_{j,\text{Sei}}''))\to \mathbb{Z}/k\mathbb{Z}.$$ On $H_1(J_{0,\text{hyp}}'')$ and $H_1(J_{j,\text{Sei}}'')$, we take the trivial homomorphism. On $H_1(J_{i,\text{hyp}}'')$, we take $\phi$ to be the composition $$H_1(J_{i,\text{hyp}}'')\xrightarrow{\phi_i}\mathbb{Z}\to \mathbb{Z}/k\mathbb{Z},$$ where the second homomorphism is the natural surjection. By the M-V sequence, the kernel of $$H_1(J_{0,\text{hyp}}'')\oplus (\oplus_{i=1}^rH_1(J_{i,\text{hyp}}''))\oplus(\oplus_{j=1}^s H_1(J_{j,\text{Sei}}''))\to H_1(M'')$$ is contained in the image of  $$H_1(\partial J_{0,\text{hyp}}'')\oplus (\oplus_{i=1}^rH_1(\partial J_{i,\text{hyp}}''))\oplus(\oplus_{j=1}^s H_1(\partial J_{j,\text{Sei}}''))$$ (under the natural inclusion). So $\phi$ induces a surjective homomorphism $$\psi':\text{im}\Big(H_1(J_{0,\text{hyp}}'')\oplus (\oplus_{i=1}^rH_1(J_{i,\text{hyp}}''))\oplus(\oplus_{j=1}^s H_1(J_{j,\text{Sei}}''))\to H_1(M'')\Big)\to \mathbb{Z}/k\mathbb{Z},$$ 
and it extends to a surjective homomorphism $$\psi:H_1(M'')\to \mathbb{Z}/k\mathbb{Z}$$ by taking the trivial homomorphism on the $\mathbb{Z}^{1-\chi}$ component.

Let  $\widetilde p: \widetilde{M}\to M''$ be the $k$-sheet cyclic cover  corresponding to the kernel of $$\pi_1(M'')\to H_1(M'')\xrightarrow{\psi}\mathbb{Z}/k\mathbb{Z}.$$ By our construction of $\psi$, each geometric piece of $\widetilde{M}$ is homeomorphic to one of the following, as oriented manifolds:
\begin{enumerate}
\item a $k$-sheet cyclic cover of the hyperbolic $3$-manifold $J_{i,\text{hyp}}''$, $i=1,\cdots,r$,
\item the Seifert manifold $J_{j,\text{Sei}}''$, $j=1,\cdots,s$.
\item the hyperbolic $3$-manifold $J_{0,\text{hyp}}''$.
\end{enumerate}

Now we suppose that there is an  orientation reversing self-homeomorphism $ \widetilde h: \widetilde{M}\to \widetilde{M}$. 
Let $\widetilde J_1$ be a geometric piece of $\widetilde{M}$ with type $J_{0,\text{hyp}}''$.
By the uniqueness of geometric decomposition, up to isotopy, there is a geometric piece $\widetilde J_2$ of $\widetilde{M}$
such that $\widetilde{h}|: \widetilde J_1\to \widetilde J_2$ is a homeomorphism. For the induced orientations of $ \widetilde J_1$ and $ \widetilde J_2$ from $ \widetilde M$ , $ \widetilde h|$ is orientation reversing.
  Item (1) is not homeomophic to $-J_{0,\text{hyp}}''$ since it has volume $k\cdot \text{vol}(J_{i,\text{hyp}}'')>\text{vol}(J_{0,\text{hyp}}'')$, item (2) is not homeomorphic to $-J_{0,\text{hyp}}''$ since it is a Seifert manifold.

  So $\widetilde{J}_2$ must be in item (3).
By the orientation convention at the end of Section 2, $\widetilde p_i=\widetilde p| : \widetilde J_i\to   J_{0,\text{hyp}}''$ is an orientation
 preserving homeomorphism, $i=1,2$. Then 
 $$\widetilde{p}_2\circ \widetilde{h}|\circ \widetilde{p}_1^{-1}:J_{0,\text{hyp}}''\to \widetilde{J}_1\to \widetilde{J}_2\to J_{0,\text{hyp}}''$$
 is an orientation preserving self-homeomorphism of $J_{0,\text{hyp}}''$. It is impossible due to our choice of $J_{0,\text{hyp}}''$ in step II, which is chiral.
 
So $\widetilde{M}$ admits no orientation reversing self-homeomorphism, as desired.
\end{proof}

\bigskip
\bigskip

\section{The case of graph $3$-manifolds}\label{graph}

We will prove the graph $3$-manifold case of Theorem \ref{main} in this section, and the proof does not rely on the main results in previous sections. We restate the result here.

\begin{proposition}\label{graphtheorem}
Let $M$ be a compact, connected, orientable, irreducible $3$-manifold with empty or tori boundary. Suppose $M$ is a graph $3$-manifold, then $M$ has a finite cover $\widetilde{M}$ that is chiral.
\end{proposition}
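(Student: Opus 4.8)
The plan is to exploit the numerical invariants $b_T$ attached to the JSJ tori of a graph manifold, together with the explicit coordinates $(a_i, b_{T_i})$ recorded in Section \ref{preliminary}, to produce a finite cover whose JSJ graph, decorated with these invariants, is rigid enough to obstruct any self-homeomorphism from acting on it by a graph automorphism that reverses orientation. First I would pass to a finite cover so that $M$ contains no twisted $I$-bundle over the Klein bottle (possible since a graph manifold is never Sol), so that each Seifert piece is an honest circle bundle over a hyperbolic $2$-orbifold with a \emph{unique} Seifert structure; this makes the fiber slopes, and hence the $b_T$ and the $a_i$, genuine topological invariants. The key point is that an orientation-reversing self-homeomorphism $f$ of a cover $\widetilde M$ must permute the JSJ pieces and JSJ tori, inducing an automorphism of the decorated JSJ graph that matches Seifert pieces to Seifert pieces, preserves each $b_T$, and acts on each $H_1(T;\mathbb Z)$ by a $\mathrm{GL}(2,\mathbb Z)$ matrix of determinant $-1$ (after accounting for the induced orientations as in the convention at the end of Section \ref{preliminary}) sending the fiber slope of one side to the fiber slope of the image side.

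The main step is to arrange, in a suitable finite cover, a local configuration at some JSJ torus that cannot be matched under any such orientation-reversing gluing. Concretely, I would fix a Seifert piece $J$ with no singular fiber (pass to a cover of a chosen piece using Lemma \ref{Liuconstruction} to kill singular fibers, then use Proposition \ref{DLWconstruction} or Lemma \ref{Liuconstruction} to push this into a cover of $M$), write $J \cong F \times S^1$, and consider a boundary torus $T_i$ with the coordinates $h_i = (1,0)$, $c_i = (0,1)$, and the neighboring fiber slope $f_i = (a_i, b_{T_i})$ with $\gcd(a_i, b_{T_i}) = 1$. An orientation-reversing self-homeo of the cover, restricted to a piece mapped to $J$, is pinned down by the Seifert structure up to a finite ambiguity: it must send the fiber to $\pm$ the fiber and act on the base orbifold; combined with the determinant $-1$ condition this forces the gluing parameter $a_i/b_{T_i}$ on one side to be carried to the \emph{negative} of the gluing parameter on the matched side. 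So it suffices to build a finite cover in which the multiset of "gluing slopes" $a_i/b_{T_i} \pmod {b_{T_i}}$, organized along the graph with its $b_T$-labels and the combinatorial type of the base orbifolds, admits no automorphism realizing this global sign flip. I would do this by: (i) choosing one JSJ torus $T_0$ and using the covering-space freedom (raising vertical fibers to powers, and passing to abelian covers of the base orbifolds as in the classical graph-manifold covering constructions, all legitimized by Lemma \ref{Liuconstruction} and Proposition \ref{DLWconstruction}) to make $b_{T_0}$ large and the residue $a_0 \bmod b_{T_0}$ equal to some value $\beta$ with $\beta \not\equiv -\beta \pmod{b_{T_0}}$ and with $b_{T_0}$ strictly larger than every other $b_T$; (ii) observing that then any self-homeomorphism must fix $T_0$ (it is the unique torus with the largest label) and fix each adjacent piece, hence must satisfy $\beta \equiv -\beta \pmod{b_{T_0}}$ on that torus, a contradiction. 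One must also rule out the case where $T_0$ is glued to a single piece by two of its boundary tori, or where an orientation-preserving isometry of the whole configuration composed with $f$ could conspire differently; this is handled by also controlling the topological type (e.g.\ the genus or the number of boundary components) of the base orbifold of the piece adjacent to $T_0$ so that it, too, is distinguished among all pieces.

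The hard part, and the step I expect to be the main obstacle, is (i): verifying that the classical repertoire of finite covers of graph manifolds — fiber-power covers of Seifert pieces, finite covers of the base $2$-orbifolds, and the gluing/assembly lemmas — actually allows independent control of a single $b_{T_0}$ and its residue $a_0 \bmod b_{T_0}$ without being forced by the graph's global constraints to change the other labels in a way that re-creates a symmetry. In particular, changing $b_{T_0}$ by passing to a fiber-power cover of a neighboring Seifert piece also rescales the fiber slopes on that piece's \emph{other} boundary tori, so one needs a careful bookkeeping argument (possibly an induction on the number of JSJ tori, peeling off pieces, or a dimension count over the "space of gluing data" of covers) to show the desired asymmetric configuration is attainable. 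The rest — that an orientation-reversing self-homeo induces a decorated-graph automorphism with the stated properties, and that a decorated graph with a unique maximal, asymmetric label admits no such automorphism — is essentially formal once the preliminaries on unique Seifert structures and the orientation conventions of Section \ref{preliminary} are in place.
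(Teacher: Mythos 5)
Your overall strategy---use the intersection numbers $b_T$ and the fiber-slope coordinates to build a cover with an ``asymmetric'' local configuration that no orientation-reversing self-homeomorphism can respect---is the same as the paper's, and your observation that an orientation-reversing map must carry the slope $(a_0,b_0)$ to $\pm(a_0+mb_0,-b_0)$ is exactly the computation the paper runs. But the proposal has a genuine gap precisely where you flag it: step (i), the construction of the cover realizing the asymmetric configuration, is not carried out, and the particular configuration you aim for (a JSJ torus $T_0$ whose label $b_{T_0}$ is \emph{strictly larger than every other} $b_T$ in the whole cover, with a prescribed residue $a_0 \bmod b_{T_0}$) is both harder than necessary and harder to achieve than the one actually needed. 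Globally maximizing one $b_T$ is delicate for the reason you yourself note---fiber-power covers of a neighboring piece rescale the slopes on that piece's other boundary tori---and you give no mechanism to close that bookkeeping. Moreover, the residue control is superfluous: once $b_0\geq 3$ and $\gcd(a_0,b_0)=1$, the equation $\pm(a_0+mb_0,-b_0)=\pm(a_0,b_0)$ forces $b_0\mid 2a_0$, hence $b_0\mid 2$, a contradiction; so $2a_0\not\equiv 0 \pmod{b_0}$ comes for free and there is nothing to arrange there.

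The paper resolves (i) with three localized constructions you do not supply. First, $b_{T}\geq 3$ is achieved at a \emph{single} torus by taking an index-$3$ cover of that torus (after adjusting the basis so the relevant coefficient is coprime to $3$) and embedding it in a cover of $M$ via Lemma \ref{Liuconstruction}; no global comparison of labels is attempted. Second, rather than making $b_{T_0}$ globally maximal, the paper fixes the adjacent product piece $J_0''=F_0\times S^1$ and passes to a cover of $F_0$ (an orbifold cover of $F_0$ capped with singular disks of orders $b_{T_i''}$) so that on $\partial\bar J_0$ \emph{all other} adjacent fiber slopes become $(a_i,1)$ while one remains $(a_0,b_0)$ with $b_0\geq 3$; the slope analysis above then rules out an orientation-reversing self-homeomorphism of $\bar J_0$ preserving this slope set. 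Third, the requirement that a global orientation-reversing homeomorphism actually restrict to such a self-map of $\bar J_0$ is enforced not by torus labels but by a final cyclic cover that multiplies the genus of the base surface of every \emph{other} piece past that of $\bar J_0$, so that $\bar J_0$ is the unique piece of its oriented homeomorphism type. Your proposal gestures at distinguishing the adjacent piece ``by genus or number of boundary components'' but does not produce the cover that does it; without these three constructions the argument does not close.
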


\begin{proof}
Again, the proof consists of a sequence of constructions of finite covers of the graph $3$-manifold $M$.

{\bf Step I}. We claim that $M$ has a finite cover $M'$, such that each Seifert piece $J'$ of $M'$ is homeomorphic to $\Sigma\times S^1$, where $\Sigma$ is a orientable surface with genus at least $2$.

At first, we pass to a double cover of $M$ and still denote it by $M$, so that it does not contain the twisted $I$-bundle over Klein bottle as a submanifold. Then the JSJ and geometric decompositions of $M$ are the same. The base orbifold of any Seifert piece $J$ of $M$ has a hyperbolic structure. So $J$ has a finite cover that is a circle bundle over an orientable hyperbolic surface with boundary, and we take a further finite cover $\hat{J}$ to make sure the base surface has genus at least $2$. 

Then we apply Proposition \ref{DLWconstruction} to get a finite cover $M'$ of $M$, so that each Seifert piece $J'$ of $M'$ is a finite cover of some $\hat{J}$. So $J'$ is the product of an orientable surface with genus at least $2$ and $S^1$, and this property holds when passing to any further finite cover of $M$.

{\bf Step II}. We claim that $M'$ has a finite cover $M''$ that contains a JSJ torus $T''\subset M''$ with $b_{T''}\geq 3$.

Take a JSJ torus $T'$ of $M'$, and we can assume $b_{T'}=1$ or $2$, otherwise we simply take $M''=M'$ and $T''=T'$. We denote regular fibers of two adjacent Seifert pieces of $T'$ by $s_1'$ and $s_2'$. We give a basis of $H_1(T')\cong \mathbb{Z}\oplus \mathbb{Z}$, so that $s_1'$ corresponds to $(1,0)$ and $s_2'$ corresponds to $(k,1)$ or $(k,2)$ (corresponding to the case of $b_{T'}=1$ or $2$ respectively). 

If $k$ has $3$ as a factor, we can re-choose the second vector in the basis of $H_1(T')\cong \mathbb{Z}\oplus \mathbb{Z}$, so that $k$ is coprime with $3$. We take $T''$ to be the finite cover of $T'$ corresponding to $3\mathbb{Z}\oplus \mathbb{Z}<\mathbb{Z}\oplus \mathbb{Z}$. It is easy to check that any pair of elevations of $s_1'$ and $s_2'$ in $T''$ have intersection number $3$ or $6$.

Now we apply Lemma \ref{Liuconstruction} to the finite cover $T''\to T'\subset M'$ to obtain a finite cover $M''$ of $M'$, so that $T''$ has an embedded lift into $M''$. On $T''\subset M''$, the regular fibers of its adjacent pieces are elevations of $s_1'$ and $s_2'$, so we have $b_{T''}=3$ or $6$. In particular $b_{T''}\geq 3$ holds.

{\bf Step III}. We claim that $M''$ has a finite cover $\bar{M}$ containing a Seifert piece $\bar{J}_0\subset \bar{M}$ such that the following condition holds. Let $s_1,\cdots,s_k$ be the (unoriented) slopes on $\partial \bar{J}_0$ given by adjacent Seifert pieces of $\bar{M}$, there is no orientation reversing self-homeomorphism $f:\bar{J}_0\to \bar{J}_0$ that preserves the set of slopes $\{s_1,\cdots,s_k\}$.

Take a Seifert piece $J_0''$ of $M''$ that contains $T_0''=T''$ as a boundary component, and $J_0''$ is homeomorphic to $F_0\times S^1$. Let $c_0, c_1,\cdots,c_l$ be the boundary components of $F$, such that $T_i''=c_i\times S^1$, $i=0,1,\cdots,l$, lie in the interior of $M''$. With respect to the coordinate of $H_1(T_i'')$ described in Section \ref{preliminary}, let $(a_i,b_{T_i''})$ be the slope $s_i''$ on $T_i''$ given by the regular fiber of the Seifert piece adjacent to $J_0''$ along $T_i''$. By Step II, we have $b_{T_0''}=b_{T''}\geq 3$.

Let $F_0'$ be the $2$-orbifold obtained from $F_0$ by pasting a (regular) disk along $c_0$ and pasting a singular disc with singular index $b_{T_i''}$ along $c_i$. We consider $F_0$ as a subsurface of $F_0'$, take a surface finite cover $q:S_0'\to F_0'$ of $F_0'$, and let $\bar{S}_0=q^{-1}(F_0)$. Then we get a Seifert manifold $\bar{J}_0=\bar{S}_0\times S^1$ that is a finite cover of $J''_0=F_0\times S^1$, such that each elevation of $s_0''$ has coordinate $(a_0,b_{T_0''})$, and each elevation of $s_i''$ has coordinate $(a_i,1)$ for $i=1,\cdots,l$. For simplicity, we use $b_0$ to denote the integer $b_{T_0''}\geq 3$.

Now we apply Lemma \ref{Liuconstruction} to the finite cover $\bar{J}_0\to J''_0\subset M''$ to get a finite cover $\bar{M}\to M''$, such that $\bar{J}_0$ has an embedded lift in $\bar{M}$. By our construction of $\bar{J}_0$, for any JSJ torus of $\bar{J}_0$, the regular fiber of the adjacent JSJ piece has coordinate either $(a_0,b_0)$ or $(a_i,1)$ for some $i=1,\cdots,l$.

Suppose there exists an orientation reversing self-homeomorphism $f:\bar{J}_0\to \bar{J}_0$ that preserves the slopes given by adjacent regular fibers. Up to isotopy, we can assume that $f$ preserves the fibers of $\bar{J}_0$. Let $\bar{T}\subset \bar{J}_0$ be a JSJ torus that contains a slope with coordinate $(a_0,b_0)$. The restriction of $f$  is an orientation reversing homeomorphism $f|_{\bar{T}}:\bar{T}\to f(\bar{T})$, where both $\bar{T}$ and $f(\bar{T})$ have induced orientations from $\bar{J}_0$. Then $f|_{\bar{T}}$ satisfies one of the following conditions:
\begin{itemize}
\item either $(f|_{\bar{T}})_*(1,0)=(1,0)$ and $(f|_{\bar{T}})_*(0,1)=(m,-1)$ for some $m\in \mathbb{Z}$,
\item or $(f|_{\bar{T}})_*(1,0)=(-1,0)$ and $(f|_{\bar{T}})_*(0,1)=(-m,1)$ for some $m\in \mathbb{Z}$.
\end{itemize}

Then we have $(f|_{\bar{T}})_*(a_0,b_0)=\pm(a_0+mb_0,-b_0)$. As unoriented slopes, neither of them equals $\pm (a_i,1)$, since $b_0\geq 3$. Moreover, if $\pm(a_0+mb_0,-b_0)=\pm(a_0,b_0)$, then we have $mb_0=-2a_0$, which is impossible  since $\text{gcd}(a_0,b_0)=1$ and $b_0\geq 3$. 

So such an orientation reversing self-homeomorphism $f:\bar{J}_0\to \bar{J}_0$ does not exist, and the proof of Step III is done.

{\bf Step IV}. We will finish the proof in this step.

Let $\bar{J}_0,\bar{J}_1,\cdots,\bar{J}_n$ be all JSJ pieces of $\bar{M}$, with $\bar{J}_i=F_i\times S^1$. Let $g_i$ be the genus of $F_i$ with $g_i\geq 2$ holds. Take a positive integer $k$ so that $k(g_i-1)> g_0-1$ holds for all $i=1 ,\cdots, n$.

For each $i=1,\cdots,n$, we take a surjective homomorphism $\phi_i:H_1(\bar{J}_i)\to H_1(F_i)\to \mathbb{Z}$. Here the first homomorphism is induced by the bundle projection, and the second homomorphism is the dual of a non-separating simple closed curve on $F_i$. So $\phi_i$ vanishes on $\text{im}(H_1(\partial \bar{J}_i)\to H_1(\bar{J}_i))$.

Now we work as in Step III of the proof of Proposition \ref{mixedtheorem}, to construct a $k$-sheet cyclic cover $\widetilde{M}$ of $\bar{M}$, such that each JSJ piece $\widetilde{J}$ of $\widetilde{M}$ satisfies one of the following conditions. 
\begin{enumerate}
\item $\widetilde{J}$ is either homeomorphic to $\bar{J}_0$ as oriented manifolds, and the homeomorphism sends regular fibers of adjacent Seifert pieces to the slopes $\{s_1,\cdots,s_k\}$ in step III.
\item or homeomorphic to a $k$-sheet cyclic cover of $\bar{J}_i$, whose base surface has genus $k(g_i-1)+1$.
\end{enumerate}

Now we suppose that $\widetilde{M}$ has an orientation reversing self-homeomorphism. By the uniqueness of geometric decomposition, there is an orientation reversing homeomorphism from $\bar{J}_0$ to one of the two items above, as oriented manifolds, that sends $\{s_1,\cdots,s_k\}$ to regular fibers of adjacent JSJ pieces. However, item (1) fails by the conclusion of step III , and item (2) fails since  $k(g_i-1)+1>g_0$ and their base surfaces have different genera.

So $\widetilde{M}$ admits no orientation reversing self-homeomorphism, as desired.
\end{proof}

\bigskip

\section{On reducible $3$-manifolds}\label{reducible}

Theorem \ref{main} concerns virtual chirality of irreducible $3$-manifolds, and we consider reducible $3$-manifolds in this section. Basically, if a $3$-manifold contains a virtually chiral prime summand, then it is virtually chiral. Our result is stated in the following theorem.

\begin{theorem}\label{reducibletheorem}
Let $M$ be a compact, connected, orientable $3$-manifold with empty or tori boundary, and $M$ has a prime summand $P$ that either supports one of the following geometries: hyperbolic, $\text{Sol}$, $\text{Nil}$, $\widetilde{\text{PSL}(2,\mathbb{R})}$, or is non-geometric.
Then $M$ has a finite cover $\widetilde{M}$ that is chiral.
\end{theorem}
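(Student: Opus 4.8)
The plan is to deduce Theorem~\ref{reducibletheorem} from Theorem~\ref{main} and Remark~\ref{sol}: those results show that any prime summand supporting one of the listed geometries, or non-geometric, is virtually chiral, so it suffices to prove Theorem~\ref{main2}, namely that a $3$-manifold $M$ with a virtually chiral prime summand $P$ is itself virtually chiral. Write the prime decomposition $M=P\#Q$; if $Q=S^3$ then $M=P$ and there is nothing to prove, so assume $\pi_1(Q)\neq 1$, remove an open ball $B^3\subset P$ and an open ball from $Q$, and write $M=P_0\cup_{S^2}Q_0$. First I would record the relevant covering construction: given any connected degree-$d$ finite cover $p\colon\widetilde P\to P$, choose $B^3$ inside an evenly covered neighborhood, so that $p^{-1}(B^3)$ is a disjoint union of $d$ balls; deleting these gives a degree-$d$ cover $\widetilde P_0\to P_0$ with $d$ boundary spheres, and gluing $d$ disjoint copies of $Q_0$ along these spheres yields a closed $3$-manifold $\widetilde M$ with a map $\widetilde M\to M$ equal to $p$ on $\widetilde P_0$ and to the identity on each copy of $Q_0$. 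This map is a genuine (not branched) degree-$d$ covering, since it is trivial over the interior of $Q_0$, equals $p$ over the interior of $P_0$, and is locally trivial over a collar of the splitting $S^2$ because $B^3$ was evenly covered; moreover $\widetilde M$ is connected, and as an oriented manifold it is the connected sum of $\widetilde P$ with $d$ copies of $Q$.

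The heart of the proof is a multiplicity count, carried out under the assumption that $\widetilde P$ is prime, chiral, and of degree $d\geq 2$ over $P$. Write $Q=R_1\#\cdots\#R_m$ for the prime decomposition of $Q$; since $\widetilde P$ is prime and chiral (hence not $S^3$), the prime summands of $\widetilde M$ are $\widetilde P$ together with $d$ copies of each $R_j$. An orientation reversing self-homeomorphism of $\widetilde M$ permutes the prime summands, and by uniqueness of the prime decomposition it carries each summand $S$ to one homeomorphic to $-S$; because $\widetilde P$ is chiral, $\widetilde P\not\cong -\widetilde P$, so it restricts to a bijection between the summands homeomorphic to $\widetilde P$ and those homeomorphic to $-\widetilde P$. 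The former number $1+d\alpha$ and the latter $d\beta$, where $\alpha=\#\{j:R_j\cong\widetilde P\}$ and $\beta=\#\{j:R_j\cong -\widetilde P\}$, and $1+d\alpha=d\beta$ is impossible for $d\geq 2$. Hence no orientation reversing self-homeomorphism of $\widetilde M$ exists, i.e.\ $\widetilde M$ is chiral.

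It then remains to produce, for each $P$ as in the statement, a prime chiral finite cover of degree at least $2$. Since $\pi_1(P)$ is infinite and residually finite, $P$ has a connected cover $\widehat P$ of degree $\geq 2$, and $\widehat P$ lies in the same class as $P$ (a finite cover of a hyperbolic, mixed, graph, $\text{Nil}$, $\widetilde{\text{PSL}(2,\mathbb{R})}$, or $\text{Sol}$ manifold is again of the same kind). If $P$ is hyperbolic, mixed, or a graph manifold, applying Proposition~\ref{hyperbolictheorem}, \ref{mixedtheorem}, or~\ref{graphtheorem} to $\widehat P$ yields a chiral finite cover of $\widehat P$, hence a chiral cover of $P$ of degree $\geq 2$. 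If $P$ supports $\text{Nil}$ or $\widetilde{\text{PSL}(2,\mathbb{R})}$ geometry, then $P$ and all its finite covers support the same geometry and so admit no degree $-1$ self-map by Remark~\ref{sol}, whence $\widehat P$ is already chiral. If $P$ is a $\text{Sol}$ manifold, Remark~\ref{sol} gives a chiral cover $M_{\psi^n}$ of $P$, and then $M_{\psi^{2n}}$ is a chiral cover of $P$ of degree $\geq 2$. In every case the resulting cover is irreducible, hence prime, so feeding it into the construction of the first paragraph produces a chiral finite cover of $M$.

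The genuinely essential point, and the one I expect to require care, is breaking the orientation symmetry: $M=P\#Q$ can be achiral even when $P$ is chiral, as $P\#(-P)$ shows, so knowing $P$ to be merely virtually chiral is not enough. Demanding a chiral cover $\widetilde P$ of degree $d\geq 2$ is exactly what forces the counts $1+d\alpha$ and $d\beta$ to differ, and the only real work beyond this observation is verifying that such a cover always exists, which is arranged by applying the results of Sections~\ref{hyperbolic}--\ref{graph} to a proper finite cover of $P$ rather than to $P$ itself.
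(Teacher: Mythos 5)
Your proposal is correct, and the overall skeleton (reduce to Theorem~\ref{main} for the prime summand, build a finite cover of the connected sum from a cover of $P$ and trivial covers of the rest, then exploit uniqueness of the \emph{oriented} prime decomposition) matches the paper's. But the decisive step — breaking the $P\#(-P)$ symmetry that you correctly identify as the crux — is handled by a genuinely different mechanism. The paper takes an arbitrary chiral cover $P'$ of $P$ and, in the case where $-P'$ survives as an oriented prime summand of the intermediate cover $M'$, invokes a separate lemma (Lemma~\ref{distinctcover}, proved by a case-by-case analysis over the geometries using volume, torsion in $H_1$, Euler characteristics of base orbifolds, or simplicial volume) to replace every $-P'$ summand by a cover not homeomorphic to $-P'$; chirality of $M'$ then follows because $P'$ appears as an oriented summand of the final cover while $-P'$ does not. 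You instead arrange that the chiral cover $\widetilde P$ has degree $d\geq 2$ over $P$ (cheap: pre-compose any of Propositions~\ref{hyperbolictheorem}, \ref{mixedtheorem}, \ref{graphtheorem} or Remark~\ref{sol} with a proper finite cover) and run a congruence count: summands of $\widetilde M=\widetilde P\,\#\,dQ$ orientation-preservingly homeomorphic to $\widetilde P$ number $1+d\alpha\equiv 1\pmod d$, those homeomorphic to $-\widetilde P$ number $d\beta\equiv 0\pmod d$, and an orientation-reversing self-homeomorphism would force these to be equal. Your route dispenses with Lemma~\ref{distinctcover} entirely and with the dichotomy of the paper's Step~II, at the price of the (trivially satisfied) degree-$\geq 2$ requirement; your covering construction, gluing $d$ copies of $Q_0$ to the $d$ lifted boundary spheres in $\widetilde P_0$, is a special case of the paper's Lemma~\ref{reduciblecover} whose star-shaped dual graph conveniently avoids the extra $S^2\times S^1$ summands that the general lemma must account for. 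Two cosmetic slips: $\widetilde M$ need not be closed (only compact with tori boundary), and "nothing to prove" when $Q=S^3$ should read "Theorem~\ref{main} applies directly"; neither affects the argument.
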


We first give an easy construction of finite covers of reducible $3$-manifolds.

\begin{lemma}\label{reduciblecover}
Let $M$ be a compact, connected, oriented $3$-manifold with oriented prime decomposition $M=\#_{i=1}^nP_i$. For each $i$, let $\widetilde{P}_i$ be a finite cover of $P_i$ with induced orientation. Then $M$ has a finite cover $\widetilde{M}$, such that each of its prime summand is homeomorphic to some $\widetilde{P}_i$ or $S^2\times S^1$ as oriented manifold, and each $\widetilde{P}_i$ shows up at least once.
\end{lemma}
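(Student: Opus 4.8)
The plan is to realize the prime decomposition $M = \#_{i=1}^n P_i$ geometrically as a connected sum along a system of spheres, and then build the desired cover by a "fiber-sum" construction over the dual graph. Concretely, write $M$ as the union of punctured pieces $P_i^\circ$ (each $P_i$ with one open ball removed) together with a central piece $Y_0$ which is a sphere with $n$ boundary components times an interval, i.e. $Y_0 \cong S^2_{(n)} \times I$ where $S^2_{(n)}$ is $S^2$ with $n$ open disks removed. The gluings are along the $2$-spheres, which are simply connected, so $\pi_1(M)$ is the free product $\ast_{i=1}^n \pi_1(P_i)$ with the central piece contributing nothing. First I would fix, for each $i$, the finite cover $p_i : \widetilde{P}_i \to P_i$ of degree $d_i$, and let $d = \mathrm{lcm}(d_1,\dots,d_n)$ or simply $d = \prod d_i$; I want every summand of the cover to sit over some $P_i$ with the "right" local cover, so I will arrange that above each $P_i^\circ$ the cover restricts (on components) to a cover factoring through $\widetilde P_i$, in fact to copies of $\widetilde P_i^\circ$ possibly precomposed with further covers — but by replacing $\widetilde P_i$ with a common further cover we may as well assume each component above $P_i^\circ$ is a copy of $\widetilde P_i^\circ$, together with some trivial (i.e. $S^2 \times I$) components if the induced cover of the boundary sphere is disconnected.

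The key step is to specify a transitive or at least consistent covering of the central piece $Y_0$ that glues up with the chosen covers of the $P_i^\circ$. The covering space $\widetilde{Y_0} \to Y_0$ is determined by a $\pi_1(Y_0)$-set, but $\pi_1(Y_0)$ is free on $n-1$ generators (coming from loops around $n-1$ of the $n$ boundary spheres), and the only constraint is compatibility at the boundary spheres: above the $i$-th boundary sphere of $Y_0$ the cover must match the cover of the boundary sphere of $P_i^\circ$ induced by $p_i$. Since $\partial P_i^\circ = S^2$ is simply connected, that induced cover is just a trivial $d_i$-sheeted cover (a disjoint union of $d_i$ spheres), so the requirement is purely combinatorial: choose a $\pi_1(Y_0)$-set whose restriction to each peripheral $\mathbb{Z}$ — actually each peripheral subgroup is trivial here since the spheres are simply connected, so there is no restriction beyond matching cardinalities. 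Thus I take the cover of $Y_0$ corresponding to a free action datum that, over the $i$-th boundary, produces exactly the number of sphere components needed; a clean choice is to let all $d_i$ be equal (after passing to common further covers) and take $\widetilde{Y_0}$ to be $d$ disjoint copies of $Y_0$, or better, a connected cover realized by a surjection $\pi_1(Y_0) \to \mathbb{Z}/d$ — the point is that each resulting component of $\widetilde{Y_0}$ is again $S^2_{(m)} \times I$ for some $m$, hence contributes only connected-sum sphere-neck data and no new fundamental group. Gluing the chosen covers of the $P_i^\circ$'s to $\widetilde{Y_0}$ along the matched sphere boundaries yields a finite cover $\widetilde M \to M$.

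Finally I would identify the prime decomposition of $\widetilde M$. By construction $\widetilde M$ is assembled from copies of $\widetilde P_i^\circ$ (each a prime manifold with one ball removed, possibly further covered — but arranged to be exactly $\widetilde P_i^\circ$), together with copies of $S^2 \times I$ (trivial sphere components over the boundaries) and the $S^2_{(m)} \times I$ pieces of $\widetilde{Y_0}$; the latter two types only record how the punctured prime pieces are connect-summed, and any leftover $2$-sphere in $\widetilde M$ that is non-separating contributes an $S^2 \times S^1$ summand. Hence each prime summand of $\widetilde M$ is homeomorphic to some $\widetilde P_i$ or to $S^2 \times S^1$, as oriented manifolds (the covering has positive degree so orientations are the induced ones), and since the covering is surjective and each $P_i^\circ$ has at least one preimage component mapping onto it, each $\widetilde P_i$ appears at least once. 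The main obstacle I anticipate is the bookkeeping in the second step: making the number of sheets and the number of sphere-boundary components match consistently across all the $P_i$ simultaneously, which forces passing to a common further cover of each $\widetilde P_i$ (harmless, since a further cover of $\widetilde P_i$ still works for the statement) and choosing the cover of the central piece $Y_0$ so that its boundary sphere components pair up correctly with those coming from the $\widetilde P_i^\circ$; once the simple-connectivity of the connect-sum spheres is exploited, there is no genuine topological difficulty, only combinatorial care.
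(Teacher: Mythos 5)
Your overall strategy --- cut $M$ into punctured prime pieces, take $D/d_i$ copies of the corresponding punctured covers where $D=\mathrm{lcm}(d_1,\dots,d_n)$, reglue along boundary spheres, and attribute the extra loops in the dual graph to $S^2\times S^1$ summands --- is exactly the paper's (the paper avoids a central piece by using $P_1$ with $n-1$ balls removed as the hub, which is only a cosmetic difference). Two points in your write-up are wrong as stated, however. First, the central piece of a connected sum is $S^3$ with $n$ open balls removed, which is \emph{simply connected} and has $n$ sphere boundary components; your $Y_0\cong S^2_{(n)}\times I$ has boundary a closed surface of genus $n-1$, so the punctured primes cannot be glued to it along spheres at all, and its fundamental group (free of rank $n-1$) would contribute free factors to $\pi_1(M)$, which is not what a connected sum does. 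The ensuing discussion of $\pi_1(Y_0)$-sets and of a connected $\mathbb{Z}/d$-cover of $Y_0$ is therefore built on the wrong model: every cover of the correct central piece is a disjoint union of holed $3$-spheres, and the only datum to match is the number $D$ of boundary spheres lying over each gluing sphere, achieved by taking $a_i=D/d_i$ copies of the preimage of $P_i^\circ$ in $\widetilde P_i$. (That preimage is $\widetilde P_i$ with $d_i$ balls removed and is connected; there are no stray $S^2\times I$ components over $P_i^\circ$ as you suggest.)

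Second, your hedge that one ``may as well'' replace each $\widetilde P_i$ by a common further cover is both unnecessary and not harmless. It is unnecessary because you are constructing one particular cover rather than classifying all covers: placing exactly $a_i$ copies of the punctured $\widetilde P_i$ over $P_i^\circ$ already glues up compatibly, since each boundary sphere upstairs maps homeomorphically to the sphere below. And it is not harmless, because the lemma asserts that the prime summands are homeomorphic to the given $\widetilde P_i$, and this precision is used in the proof of Theorem~\ref{reducibletheorem}: there the summand must be the specific chiral cover $P'$, and a further finite cover of a chiral manifold need not be chiral. With these two corrections your argument coincides with the paper's proof.
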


\begin{proof}
We first take an explicit model of $M$. Let $\mathring{P}_1$ be the complement of $n-1$ disjoint open $3$-balls in $P_1$, and let the boundaries of these $3$-balls be $S_2,\cdots,S_n$. For each $i=2,\cdots,n$, let $\mathring{P}_i$ be the complement of one open $3$-ball in $P_i$, and let the boundary of this $3$-ball be $S'_i$. Then $M$ is homeomorphic to the manifold obtained from $\bigsqcup_{i=1}^n \mathring{P}_i$ by pasting $S_i$ and $S_i'$ via an orientation reversing homeomorphism.

Let $d_i$ be the covering degree of $\widetilde{P}_i\to P_i$, let $D=\text{lcm}(d_1,\cdots,d_n)$ and let $a_i=D/d_i$. Let $\mathring{\widetilde{P}}_i$ be the preimage of $\mathring{P}_i$ in $\widetilde{P}_i$. For $i=1$ and any $j=2,\cdots,n$, $\mathring{\widetilde{P}}_1$ has $d_1$ spherical boundary components that cover $S_j$. For $i=2,\cdots,n$, $\mathring{\widetilde{P}}_i$ has $d_i$ spherical boundary components that cover $S_i'$.

For each $i$, we take $a_i$ copies of $\mathring{\widetilde{P}}_i$ and take their disjoint union $\bigsqcup_{i=1}^na_i\mathring{\widetilde{P}}_i$. Then for each sphere $S_i$ or $S_i'$, there are exactly $a_1\cdot d_1=a_i\cdot d_i=D$ boundary spheres in $\bigsqcup_{i=1}^na_i\mathring{\widetilde{P}}_i$ that are mapped to it. We take an arbitrary pairing of these boundary spheres corresponding to $S_i$ and $S_i'$, and paste each pair by an orientation reversing homeomorphism to get a $3$-manifold $\widetilde{M}$.

Then $\widetilde{M}$ is a degree-$D$ cover of $M$, and it is a connected sum of $a_i$ copies of $\widetilde{P}_i$, and $1+(n-1)D-\sum_{i=1}^na_i$ copies of $S^2\times S^1$ (since the dual graph of $\widetilde{M}$ may not be a tree).

\end{proof}

\begin{lemma}\label{distinctcover}
Let $P$ be $3$-manifold either supporting one of the following geometries: hyperbolic, $\text{Sol}$, $\text{Nil}$, $\widetilde{\text{PSL}(2,\mathbb{R})}$, or is non-geometric. Then $P$ has a finite cover $\widetilde{P}$ that is not homeomorphic to $P$
\end{lemma}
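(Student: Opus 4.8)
The plan is to avoid any co-Hopficity argument (which in fact fails for $\text{Sol}$) and instead, for each of the five types of $P$, to exhibit a \emph{homeomorphism invariant} $\nu$ together with an explicit nontrivial finite cover $\widetilde P\to P$ on which $\nu$ is strictly larger than on $P$; this forces $\widetilde P\not\cong P$. Throughout, the existence of the required nontrivial finite covers comes from the residual finiteness of $3$-manifold (and $2$-orbifold) fundamental groups together with the fact that these groups are infinite.

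\textbf{Volume-type cases.} If $P$ is hyperbolic, take any nontrivial finite cover $\widetilde P$ of degree $d\ge 2$; then $\text{vol}(\widetilde P)=d\cdot\text{vol}(P)>\text{vol}(P)$, and hyperbolic volume is a topological invariant by Mostow--Prasad rigidity, so $\widetilde P\not\cong P$. If $P$ is mixed, replace volume by the simplicial volume $\|\cdot\|$, which is multiplicative under finite covers and satisfies $\|P\|>0$ since $P$ has a hyperbolic JSJ piece; any nontrivial cover then has strictly larger simplicial volume. If $P$ is a $\text{Nil}$ or $\widetilde{\text{PSL}(2,\mathbb{R})}$ manifold, then $P$ is Seifert fibered with non-zero Euler number $e(P)$, and because $e(P)\ne 0$ the Seifert fibration is unique up to isotopy, so $|e(P)|$ is a homeomorphism invariant. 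Its base orbifold $B$ has infinite orbifold fundamental group, hence an orbifold cover $B'\to B$ of some degree $d\ge 2$; pulling back the fibration yields a degree-$d$ cover $\widetilde P\to P$ with $e(\widetilde P)=d\cdot e(P)$, so $|e(\widetilde P)|>|e(P)|$ and $\widetilde P\not\cong P$. If $P$ is a $\text{Sol}$ manifold, Remark~\ref{sol} provides a finite cover of $P$ that is a torus bundle $M_\phi$ over $S^1$ with $|\text{tr}\,\phi|>2$; for each $n\ge 1$ the $n$-fold cyclic cover $M_{\phi^n}$ also covers $P$, and the torsion subgroup of $H_1(M_{\phi^n})$ has order $|\det(\phi^n-I)|=|\text{tr}(\phi^n)-2|$, which is unbounded in $n$ since $\phi$ has a real eigenvalue of absolute value $>1$. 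As $H_1$ is a homeomorphism invariant, only finitely many $M_{\phi^n}$ are homeomorphic to $P$, so $\widetilde P=M_{\phi^n}$ does the job for a suitable $n\ge 2$.

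\textbf{The graph-manifold case.} This is the one requiring care, since the simplicial volume now vanishes and $P$ may be homeomorphic to some of its own finite covers. Here I would use $\nu(P):=\sum_i\bigl(-\chi^{\text{orb}}(B_i)\bigr)$, summing over those JSJ pieces $J_i$ of $P$ that are Seifert fibered with \emph{hyperbolic} base orbifold $B_i$ (so every summand is positive). One first checks $\nu(P)>0$: otherwise every JSJ piece would be a twisted $I$-bundle over the Klein bottle, which has a single boundary torus, so the JSJ graph would be a single edge and $P$ a torus semi-bundle, hence geometric --- a contradiction. Since the JSJ decomposition is canonical and a Seifert piece with $\chi^{\text{orb}}<0$ carries a unique Seifert fibration up to isotopy, $\nu$ is a homeomorphism invariant. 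Now pick such a piece $J_1$, over $B_1$; take an orbifold cover $B_1'\to B_1$ of degree $\ge 2$ and pull back the fibration of $J_1$ to get $J_1'\to J_1$ whose base orbifold covers $B_1$ with degree $\ge 2$; by Lemma~\ref{Liuconstruction}, $J_1'$ has an embedded lift in a finite cover $\widetilde P\to P$. A component of the preimage of a hyperbolic-type JSJ piece of $P$ is again of hyperbolic type (base degree $\ge 1$ times a negative orbifold Euler characteristic stays negative), and a JSJ torus between two such pieces lifts to JSJ tori of $\widetilde P$, because two distinct slopes on a torus remain distinct in any finite cover, so the adjacent fibrations still fail to match across the lift. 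Hence the hyperbolic-type JSJ pieces of $\widetilde P$ are exactly the components of preimages of hyperbolic-type JSJ pieces of $P$, and summing --- using the embedded copy of $J_1'$ for the contribution over $J_1$ and at least one lift of each remaining hyperbolic piece $J_i$ --- gives $\nu(\widetilde P)\ge 2\bigl(-\chi^{\text{orb}}(B_1)\bigr)+\sum_{i\ge 2}\bigl(-\chi^{\text{orb}}(B_i)\bigr)>\nu(P)$, so $\widetilde P\not\cong P$ (and the covering degree is then automatically greater than $1$).

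\textbf{Main obstacle.} The genuine difficulty is the graph case: one must find an invariant that both survives the vanishing of simplicial volume and is unaffected by a graph manifold's ability to cover itself, and then verify the two structural facts used above --- that $\chi^{\text{orb}}$ of a hyperbolic base orbifold is a true topological invariant of a JSJ piece, and that the relevant JSJ tori persist in the cover produced by Lemma~\ref{Liuconstruction}. Once $\nu$ is set up, the monotonicity estimate is bookkeeping, and the four geometric cases are routine after identifying the correct scaling quantity (volume, simplicial volume, Euler number, or the order of $H_1$-torsion along cyclic covers).
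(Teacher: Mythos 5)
Your argument is correct and follows the same overall strategy as the paper's proof: a case-by-case choice of a homeomorphism invariant that strictly increases along an explicit finite cover. The hyperbolic, mixed, and Sol cases coincide with the paper's essentially verbatim (same volume, simplicial volume, and $|\mathrm{Tor}(H_1)|$-of-cyclic-covers arguments). You diverge in two places. For Nil and $\widetilde{\text{PSL}(2,\mathbb{R})}$ you use the Euler number of the Seifert fibration, where the paper uses $|\mathrm{Tor}(H_1)|$ of parabolic torus bundles (Nil) and the base orbifold's Euler characteristic ($\widetilde{\text{PSL}(2,\mathbb{R})}$); your version is more uniform, but be aware that uniqueness of the fibration does not follow from $e\ne 0$ alone (lens spaces admit many fibrations with nonzero Euler number) --- it holds here because every closed orientable Seifert manifold with non-unique fibration is spherical, $S^2\times\mathbb{E}^1$, or Euclidean, so neither geometry occurs on the exceptional list. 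For graph manifolds, the paper embeds (via Lemma \ref{Liuconstruction}) a single Seifert piece whose base surface has Euler characteristic strictly below the minimum over all pieces of $P$, so that $\widetilde P$ contains a JSJ piece homeomorphic to no piece of $P$; this one-sided comparison avoids your need to verify that the JSJ decomposition of $\widetilde P$ is exactly the preimage of that of $P$ (in particular that no hyperbolic-base piece merges or disappears upstairs), which your summed invariant $\nu$ does require and which you correctly, if somewhat informally, justify --- the only loose end being pieces that are twisted $I$-bundles over the Klein bottle, whose elevations can be $T^2\times I$ and get absorbed; this does not affect $\nu$ but is worth passing to the Klein-bottle-free double cover (as the paper does elsewhere) to dismiss cleanly. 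Both routes are valid; the paper's is slightly lighter on JSJ bookkeeping, yours packages four of the six cases under a single "multiplicative invariant" heuristic.
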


\begin{proof}
We run a case-by-case argument.
\begin{itemize}
\item If $P$ is a hyperbolic $3$-manifold, we take a finite cover $\widetilde{P}$ of $P$ with degree $d>1$. For the hyperbolic volume, we have $\text{vol}(\widetilde{P})=d\text{vol}(P)>\text{vol}(P)$, so $\widetilde{P}$ is not homeomorphic to $P$.

\item If $P$ is a $\text{Sol}$ $3$-manifold, up to taking a double cover of $P$, we can assume that $P=M_{\phi}$ for some $\phi\in \text{SL}(2,\mathbb{Z})$ with eigenvalues $\lambda,\lambda^{-1}$ and $\lambda>1$ holds. Then $M_{\phi^k}$ is a finite cover of $P$, and $$|\text{Tor}(H_1(M_{\phi^k}))|=(\lambda^k-1)(1-\lambda^{-k})=\lambda^k+\lambda^{-k}-2.$$ So if $k$ is large enough, we have $|\text{Tor}(H_1(M_{\phi^k}))|>|\text{Tor}(H_1(P))|$. So $\widetilde{P}=M_{\phi^k}$ is a finite cover of $P$ and is not homeomorphic to $P$.

\item If $P$ is a $\text{Nil}$ $3$-manifold, up to taking a finite cover, $P$ is homeomorphic to a torus bundle $M_{\phi}$ with monodromy matrix $\phi=\pm
\begin{pmatrix}
1 & n \\
0 & 1
\end{pmatrix}$. If $k$ is large enough, $M_{\phi^{2k}}$ is a finite cover of $P$, and $|\text{Tor}(H_1(M_{\phi^k}))|=2|kn|>|\text{Tor}(H_1(P)|$. So $\widetilde{P}=M_{\phi^{2k}}$ is a finite cover of $P$ and is not homeomorphic to $P$.

\item If $P$ is a $\widetilde{\text{PSL}(2,\mathbb{R})}$ $3$-manifold, let the base $2$-orbifold of $P$ be denoted by $\mathcal{O}$. Then $\mathcal{O}$ has a hyperbolic structure, and we take a finite cover $\widetilde{\mathcal{O}}$ of $\mathcal{O}$ with degree $d>1$. Then $\chi(\widetilde{\mathcal{O}})=d\cdot \chi(\mathcal{O})<\chi(\mathcal{O})$, thus $\widetilde{\mathcal{O}}$ and $\mathcal{O}$ are not homeomorphic to each other. Let $\widetilde{P}$ be the pull-back of the bundle $P\to \mathcal{O}$ via the covering map $\widetilde{\mathcal{O}}\to \mathcal{O}$. Then $\widetilde{P}$ is a finite cover of $P$. $\widetilde{P}$ is not homeomorphic to $P$ because of the uniqueness of Seifert fibration, and their base orbifolds have different Euler characteristics.

\item  If $P$ is a mixed $3$-manifold, we take a finite cover $\widetilde{P}$ of $P$ with degree $d>1$. For the simplicial volume, we have $\|\widetilde{P}\|=d\|P\|>\|P\|$, so $\widetilde{P}$ is not homeomorphic to $P$.

\item If $P$ is a graph $3$-manifold, then it has a Seifert piece $J$ with hyperbolic base orbifold $\mathcal{O}$. Let $X$ be the minimum of Euler characteristics of base $2$-orbifolds of Seifert pieces of $P$. Then $\mathcal{O}$ has a finite degree surface cover $F$ with $\chi(F)<X$, and $\widetilde{J}=F\times S^1$ is a finite cover of $J$. By Lemma \ref{Liuconstruction}, $P$ has a finite cover $\widetilde{P}$ that contains $\widetilde{J}$ as a Seifert piece. Then $\widetilde{P}$ is not homeomorphic to $P$, since no Seifert piece of $P$ is homeomorphic to $\widetilde{J}\subset \widetilde{P}$, due to the consideration of Euler characteristics of base orbifolds.
\end{itemize}
\end{proof}

Now we are ready to prove Theorem \ref{reducibletheorem}.
\begin{proof}
We fix an orientation of $M$, and let $P$ be an oriented prime summand of $M$ that either supports one of the following geometries: hyperbolic, $\text{Sol}$, $\text{Nil}$, $\widetilde{\text{PSL}(2,\mathbb{R})}$, or is non-geometric. 

{\bf Step I}. We claim that $M$ has a finite cover $M'$, such that it has a chiral prime summand $P'$ that is a finite cover of $P$.

Let $M=a_0P\# a_1P_1\#\cdots \#a_n P_n$ be the prime decomposition of $M$. By Theorem \ref{main}, $P$ has a chiral finite cover $P'$. To apply Lemma \ref{reduciblecover}, we take the finite cover $P'$ for $P$, and take the trivial finite cover for $P_1,\cdots,P_n$. Then Lemma \ref{reduciblecover} provides a finite cover $M'$ of $M$, such that each of its prime summand is homeomorphic to $P'$, $P_1,\cdots,P_n$ or $S^2\times S^1$. So the proof of Step I is done.

{\bf Step II}. We finish the proof in this step. 

If $M'$ does not contain $-P'$ as its oriented prime summand, since $P'$ is chiral, it is not an oriented prime summand of $-M'$. By the uniqueness of prime decomposition of oriented $3$-manifolds, $M'$ is not homeomorphic to $-M'$. So $M'$ does not admit an orientation reversing self-homeomorphism, and we take $\widetilde{M}=M'$.

Now we assume that $-P'$ is an oriented prime summand of $M'$. Let the oriented prime decomposition of $M'$ be 
$$M'=kP'\#l(-P')\#a_1P_1'\#\cdots \# a_nP_n'.$$ 
Here each $P_i'$ is not homeomorphic to $P'$ or $-P'$. By Lemma \ref{distinctcover}, $-P'$ has a finite cover $\widetilde{(-P')}$ that is not homeomorphic to $-P'$. To apply Lemma \ref{reduciblecover}, we take the trivial finite cover for $P'$ and $P_1',\cdots, P_n'$, and take the finite cover $\widetilde{(-P')}$ for $(-P')$. Then Lemma \ref{reduciblecover} provides a finite cover $\widetilde{M}$ of $M'$, such that each of its oriented prime summand is homeomorphic to $P', \widetilde{(-P')},P_1',\cdots,P_n'$ or $S^2\times S^1$. So $-P'$ is not an oriented prime summand of $\widetilde{M}$, and $\widetilde{M}$ admits no orientation reversing self-homeomorphism.

\end{proof}


\begin{thebibliography}{ZZZZ}


\bibitem[Agol]{Agol} {I.~Agol}, {\it The virtual Haken conjecture}, with an appendix by I.~Agol, D.~Groves, J.~Manning, Documenta Math. 18 (2013), 1045 - 1087.

\bibitem[AFW]{AFW} {M.~Aschenbrenner, S.~Friedl, H.~Wilton}, {\it $3$-manifold groups},
EMS Ser. Lect. Math.
European Mathematical Society (EMS), Zürich, 2015. xiv+215 pp.

\bibitem[BL]{BL}
{M. Belolipetsky, A. Lubotzky}, {\it Finite groups and hyperbolic manifolds}, 
Invent. Math. {\bf 162} (2005), 459 - 472.





\bibitem[CZ]{CZ} {S.~C.~Chagas, P.~A.~Zalesskii}, {\it Hyperbolic $3$-manifold groups are subgroup into conjugacy separable}, Comm. Algebra 50 (2022), no.5, 2264 - 2268.


\bibitem[DLW]{DLW} {D.~Derbez, Y.~Liu, S.~Wang}, {\it Chern-Simons theory, surface separability, and volumes of $3$-manifolds}, J. Topol. 8 (2015), no.4, 933 - 974.


\bibitem[Hat]{Hat} {A.~Hatcher}, {\it Notes on basic $3$-manifold topology}, available at \url{https://pi.math.cornell.edu/~hatcher/3M/3M.pdf}.




\bibitem[Liu]{Liu} {Y.~Liu}, {\it A characterization of virtually embedded subsurfaces in $3$-manifolds}, Trans. Amer. Math. Soc. 369 (2017), no. 2, 1237 - 1264.

\bibitem[LS]{LS} Y. Liu and H. Sun, {\it Toward and after virtual specialization in 3-manifold topology.} Surveys in differential geometry 2020. Surveys in 3-manifold topology and geometry, 215-252, Surv. Differ. Geom., 25, Int. Press, Boston, MA.




\bibitem[LR]{LR} {D.~D.~Long, A.~W.~Reid}, {\it On asymmetric hyperbolic manifolds}, Math. Proc. Cambridge Philos. Soc.138 (2005), no.2, 301 - 306.



\bibitem[MR]{MR} {C.~Maclachlan, A.~Reid}, {\it The arithmetic of hyperbolic 3-manifolds}, 
Grad. Texts in Math., 219 Springer-Verlag, New York, 2003. xiv+463 pp.

\bibitem[Mu]{Mu}  {D. M\"ullner}, 
{\it Orientation reversal of manifolds},
Algebr. Geom. Topol. {\bf 9} (2009), 2361 - 2390.



\bibitem[NWW]{NWW} {C.~Neofytidis, S.~Wang, Z.~Wang}, {\it Realising sets of integers as mapping degree sets}, Bull. Lond. Math. Soc. 55 (2023), no.4, 1700 - 1717.

\bibitem[PW]{PW} {P.~Przytycki, D.~Wise}, \textit{Mixed 3-manifolds are virtually special}, 
J. Amer. Math. Soc. 31 (2018), no. 2, 319 - 347.


\bibitem[Sco]{Sco} {P.~Scott}, {\it The geometries of $3$-manifolds}, Bull. London Math. Soc. 15 (1983), no.5, 401 - 487.


\bibitem[SWWZ]{SWWZ} {H.~Sun, S.~Wang, J.~Wu, H.~Zheng}, {\it Self-mapping degrees of 3-manifolds,} Osaka J. Math.49(2012), no.1, 247 - 269.


\bibitem[TWW]{TWW} {Y.~Tian, S.~Wang, Z.~Wang}, {\it Achirality of Sol 3-Manifolds, Stevenhagen Conjecture and Shimizu's L-series}, preprient 2024, \url{https://arxiv.org/abs/math/2406.13241}.


\bibitem[Wise]{Wise} {D.~Wise}, {\it From riches to raags: $3$-manifolds, right-angled Artin groups, and cubical geometry}, CBMS Regional Conference Series in Mathematics, 117. Published for the Conference Board of the Mathematical Sciences, Washington, DC; by the American Mathematical Society, Providence, RI, 2012, xiv+141 pp.


\end{thebibliography}
\end{document}